	\newtheorem{theorem}{Theorem}[section]
	\newtheorem{conc}[theorem]{Conclusion}
	\newtheorem{coro}[theorem]{Corollary}
	\newtheorem{rem}[theorem]{Remark}
\title{\Large \textbf{A new generalization of beta function with three parameters Mittag-Leffler function}}
\author{Muhammet AY \\ 
\small Ahi Evran University, Department of Mathematics, Kırşehir, Turkey. \\ \small 161017007@ogrmail.ahievran.edu.tr}
\date{}
\begin{document}
\maketitle
\begin{abstract} The main object of this paper is to present a new generalized beta function which defined by three parametres Mittag-Leffler function. We also introduce new generalizations of hypergeometric and confluent hypergeometric functions with the help of new generalized beta function. Furthermore, we obtained various properties of these functions such as integral representations, Mellin transforms, differentiation formulas, transformation formulas, recurrence relations and summation formula. \\ 
\textbf{Keywords.} Gamma function, beta function, Gauss hypergeometric function, confluent hypergeometric function, Mittag-Leffler function.
\end{abstract}

\section{Introduction}
We first remember the  gamma and the beta functions \cite{1} which defined by
 \begin{equation} 
 \Gamma(x)=\int_{0}^{\infty} t^{x-1} e^{-t}dt,\ Re(x)>0,
 \end{equation}
 \begin{equation}\label{1}
 B(x,y) =  \int_{0}^{1}t^{x-1}(1-t)^{y-1}dt,\ Re(x)>0,Re(y)>0.
 \end{equation}
The integral representation of classical Gauss hypergeometric function (GHF), which defined as
\begin{equation} 
{}_{2}\textrm{F}_{1}(a,b;c;z)= \sum_{n=0}^{\infty} \frac{(a)_n (b)_n}{(c)_n} \frac{z^n}{n!}
\end{equation}
is as follows \cite{1}
\begin{eqnarray} 
{}_{2}\textrm{F}_{1}(a,b;c;z)= \frac{1}{B(b, c-b)} \int_{0}^{1} t^{b-1} (1-t)^{c-b-1}(1-zt)^{-a} dt 
\end{eqnarray}
where $|arg(1-z)|<\pi,\ Re(c)>Re(b)>0$. The GHF can also be written in terms of beta function \eqref{1} as follows \cite{1}:
\begin{align} 
{}_{2}\textrm{F}_{1}(a,b;c;&z)= \sum_{n=0}^{\infty} (a)_n \frac{B(b+n,\;c-b)}{B(b,\;c-b)}  \frac{z^n}{n!}.& \\ \nonumber &(|z|<1,\ Re(c)>Re(b)>0 )&
\end{align} 

The integral representation of classical confluent hypergeometric function (CHF), which defined as
\begin{eqnarray} 
{}_{1}\textrm{F}_{1}(b;c;z)=\Phi(b;c;z)= \sum_{n=0}^{\infty} \frac{(b)_n}{(c)_n} \frac{z^n}{n!}
\end{eqnarray}
is as follows \cite{1}
\begin{align} 
\Phi(b;c;z)= \frac{1}{B(b,c-b)} \int_{0}^{1} t^{b-1} (1-t)^{c-b-1}e^{zt} dt
\end{align}
where $(Re(c)>Re(b)>0)$. The CHF can also be written in terms of beta function \eqref{1} as follows \cite{1}:
\begin{align} 
\Phi(b;c;z)= &\sum_{n=0}^{\infty} \frac{B(b+n,\;c-b)}{B(b,\;c-b)} \frac{z^n}{n!}.&  \\ \nonumber
&(Re(c)>Re(b)>0)&
\end{align}

In recent years, several extensions of well known special functions have been thought by several authors [\ref{k2}-\ref{k6},\ref{k10}]. In 1994, Chaudry and Zubair \cite{2} have introduced the following extension of gamma function
\begin{eqnarray} \label{25}
\Gamma_p (x)= \int_{0}^{\infty} t^{x-1} \exp{\left(-t-\frac{p}{t}\right)}dt, \ Re(p)>0.
\end{eqnarray}
In 1997, Chaudry et al. \cite{3} studied the following extension of Euler's beta function
\begin{align} \label{26}
B_p(x,y)=\int_{0}^{1} t^{x-1}(1-t)^{y-1}exp\left[-\frac{p}{t(1-t)}\right] dt, \  Re(p)>0.
\end{align} 
Afterwards, Chaudry et al. \cite{7} used $B_p(x,y)$ to extended the hypergeometric and confluent hypergeometric functions as follows: 
\begin{align} \label{27}
{}_{}\textrm{F}_{p}(a&,b;c;z)= \sum_{n=0}^{\infty} (a)_n \frac{B_{p}(b+n,\;c-b)}{B(b,\;c-b)}  \frac{z^n}{n!},& \\ \nonumber   &( p\geq0;|z|<1\ Re(c)>Re(b)>0)& 
\end{align} 
\begin{align} \label{28}
{}_{}{\Phi}_{p}(b;&c;z)= \sum_{n=0}^{\infty}\frac{B_{p}(b+n,\;c-b)}{B(b,\;c-b)}\frac{z^n}{n!}& \\ \nonumber &( p\geq0;\ Re(c)>Re(b)>0 )&
\end{align} 
and give the Euler integral representations
\begin{align}  \nonumber
{}_{}\textrm{F}_{p}(a,b;c&;z)= \frac{1}{B(b, c-b)} \int_{0}^{1} t^{b-1} (1-t)^{c-b-1}(1-zt)^{a}exp\left[-\frac{p}{t(1-t)}\right] dt,& \\  &(p>0 ;\ p=0 \ and \ |arg(1-z)|<\pi ;\ Re(c)>Re(b)>0)&
\end{align} 
\begin{align}  \nonumber
{}_{}{\Phi}_{p}(b;c;z)=& \frac{1}{B(b, c-b)} \int_{0}^{1} t^{b-1} (1-t)^{c-b-1}exp\left[zt-\frac{p}{t(1-t)}\right] dt.& \\ &(p>0 ;\ p=0\ and \ Re(c)>Re(b)>0)&
\end{align} 

They also obtained the integral representations, differentiation properties, Mellin transforms, transformation formulas, recurence relations, summation and asymptotic formulas for these functions \cite{7}.

In this paper, we use there parameters Mittag-Leffler function, which defined in Prabhakar \cite{11} as
\begin{eqnarray} \label{2}
	E_{\alpha, \beta}^{\gamma}(z)= \sum_{n=0}^{\infty} \frac{(\gamma)_n}{\Gamma(\alpha n + \beta)}\frac{z^n}{n!},\ Re(\alpha)>0
\end{eqnarray} 
where $\alpha$, $\beta$, $\gamma$ $\in$ $\mathbb{C}$. It is obvious that,
\begin{align} \nonumber
E_{\alpha, \beta}^{1}(z) &=\sum_{n=0}^{\infty} \frac{(1)_n}{\Gamma(\alpha n + \beta)}\frac{z^n}{n!}= \sum_{n=0}^{\infty} \frac{z^n}{\Gamma(\alpha n + \beta)}=E_{\alpha, \beta}(z),&  \\ \nonumber
E_{\alpha, 1}^{1}(z)&= \sum_{n=0}^{\infty} \frac{(1)_n}{\Gamma(\alpha n + 1)}\frac{z^n}{n!}= \sum_{n=0}^{\infty} \frac{z^n}{\Gamma(\alpha n + 1)}= E_{\alpha}(z),& \\ \nonumber
E_{1, 1}^{1}(z) &=\sum_{n=0}^{\infty} \frac{(1)_n}{\Gamma( n + 1)}\frac{z^n}{n!}=\sum_{n=0}^{\infty} \frac{z^n}{n!}=e^z. &
\end{align} 
For more properties of the generalized Mittag-Leffler function, we refer to the reader \cite{9}.

\section{New generalizations of gamma and beta functions}
In this section, we present a new generalizations of gamma and beta functions by using  there parameters Mittag-Leffler function in \eqref{2} as
\begin{align} \label{3}
{}^{M\!L\!}\Gamma_{p}(x) =\int_{0}^{\infty} t^{x-1}E_{\alpha, \beta}^{\gamma}\left(-t-\frac{p}{t}\right) dt \\ \nonumber	(Re(p)>0, Re(x)>0, Re(\alpha)>0)
\end{align} 
and
\begin{align} \label{4} 
{}^{M\!L\!\!}{B}_{p}(x,y)=\int_{0}^{1}t^{x-1}(1-t)^{y-1}E_{\alpha, \beta}^{\gamma}\left(-\frac{p}{t(1-t)}\right)dt. \\ \nonumber (Re(p)>0, Re(x)>0, Re(y)>0, Re(\alpha)>0) 
\end{align} 
We call these functions as ML-generalized gamma and ML-generalized beta functions, respectively.

Clearly, when $\alpha=\beta=\gamma=1$, equations \eqref{3} and \eqref{4} recudes to \eqref{25} and \eqref{26}. Also, when $p=0$ and $\beta=1$ (or $\beta=2$)  , recudes to classical gamma and beta functions \cite{1}. 

\begin{theorem} For the product of ML-generalized gamma function, we have the following integral representation:
\begin{align} \label{5} \nonumber
{}^{M\!L\!}\Gamma_{p}(x)&{}^{M\!L\!}\Gamma_{p}(y)=4\int_{0}^{\frac{\pi}{2}}\int_{0}^{\infty} r^{2(x+y)-1} \cos^{2x-1}\theta \sin^{2y-1}\theta& \\  &\times E_{\alpha, \beta}^{\gamma}\left(-r^2\cos^2\theta -\frac{p}{r^2\cos^2\theta}\right) E_{\alpha, \beta}^{\gamma}\left(-r^2\sin^2\theta -\frac{p}{r^2\sin^2\theta}\right)drd\theta.&
\end{align}
\end{theorem}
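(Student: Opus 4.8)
The plan is to imitate the classical derivation of the polar-coordinate form of $\Gamma(x)\Gamma(y)$, adapting it to the Mittag-Leffler kernel. First I would rewrite each factor as an integral over a squared variable. Starting from the definition \eqref{3}, in ${}^{M\!L\!}\Gamma_{p}(x)$ I substitute $t=u^{2}$, $dt=2u\,du$, which converts the integral into
\[
{}^{M\!L\!}\Gamma_{p}(x)=2\int_{0}^{\infty}u^{2x-1}E_{\alpha,\beta}^{\gamma}\!\left(-u^{2}-\frac{p}{u^{2}}\right)du,
\]
and likewise ${}^{M\!L\!}\Gamma_{p}(y)=2\int_{0}^{\infty}v^{2y-1}E_{\alpha,\beta}^{\gamma}(-v^{2}-p/v^{2})\,dv$. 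Note that the substitution sends $p/t$ to $p/u^{2}$, which is precisely the form appearing inside the target kernel, so no further reshaping of the argument is needed.

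Next I would form the product of these two one-dimensional integrals and combine them into a single double integral over the open first quadrant $(0,\infty)\times(0,\infty)$:
\[
{}^{M\!L\!}\Gamma_{p}(x){}^{M\!L\!}\Gamma_{p}(y)=4\int_{0}^{\infty}\!\!\int_{0}^{\infty}u^{2x-1}v^{2y-1}E_{\alpha,\beta}^{\gamma}\!\left(-u^{2}-\frac{p}{u^{2}}\right)E_{\alpha,\beta}^{\gamma}\!\left(-v^{2}-\frac{p}{v^{2}}\right)du\,dv.
\]
Then I pass to polar coordinates $u=r\cos\theta$, $v=r\sin\theta$ with $r\in(0,\infty)$ and $\theta\in(0,\pi/2)$, whose Jacobian gives $du\,dv=r\,dr\,d\theta$. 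Under this map $u^{2}=r^{2}\cos^{2}\theta$ and $v^{2}=r^{2}\sin^{2}\theta$, so the two Mittag-Leffler factors become exactly the kernels in \eqref{5}; the monomials give $u^{2x-1}v^{2y-1}=r^{2x+2y-2}\cos^{2x-1}\theta\sin^{2y-1}\theta$, and absorbing the Jacobian factor $r$ collects the radial power into $r^{2(x+y)-1}$. This reproduces the claimed identity.

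The only genuine point to check is the legitimacy of the two analytic manipulations, namely writing the product of the two one-dimensional integrals as a single two-dimensional integral (Fubini--Tonelli) and then executing the change of variables. Both reduce to absolute convergence of the double integral, which in turn follows from the separate convergence of each factor ${}^{M\!L\!}\Gamma_{p}$ under the standing hypotheses $Re(p)>0$, $Re(x)>0$, $Re(y)>0$, $Re(\alpha)>0$ assumed in \eqref{3}: the $-p/u^{2}$ term controls the integrand near $u=0$ and the $-u^{2}$ term controls it as $u\to\infty$. I expect this convergence bookkeeping, rather than the algebra, to be the main (and only modest) obstacle; once it is in hand, the polar substitution is the standard diffeomorphism of $(0,\infty)\times(0,\pi/2)$ onto the open first quadrant, and the computation closes immediately.
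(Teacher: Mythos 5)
Your proposal is correct and follows essentially the same route as the paper: the substitution $t=u^{2}$ in each factor, the product written as a double integral over the first quadrant, and the polar change of variables $u=r\cos\theta$, $v=r\sin\theta$ with Jacobian $r\,dr\,d\theta$. Your added remarks on absolute convergence and Fubini--Tonelli are a welcome refinement that the paper's proof passes over in silence.
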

\begin{proof}
Substituting $t=\eta^2$ in (\ref{5}), we get
\begin{align*} 
{}^{M\!L\!}\Gamma_{p}(x) =2\int_{0}^{\infty} \eta^{2x-1}E_{\alpha, \beta}^{\gamma}\left(-\eta^2-\frac{p}{\eta^2}\right) d\eta.
\end{align*}
Therefore 
\begin{align*} \nonumber
{}^{M\!L\!}\Gamma_{p}(x){}^{M\!L\!}\Gamma_{p}(y) =4\int_{0}^{\infty}\int_{0}^{\infty} \eta^{2x-1}\xi^{2y-1} E_{\alpha, \beta}^{\gamma}\left(-\eta^2-\frac{p}{\eta^2}\right)E_{\alpha, \beta}^{\gamma}\left(-\xi^2-\frac{p}{\xi^2}\right) d\eta d\xi.
\end{align*}
Letting $\eta=r\cos\theta$ and $\xi=r\sin\theta$ in the above equality,
\begin{align*} \nonumber
{}^{M\!L\!}\Gamma_{p}(x)&{}^{M\!L\!}\Gamma_{p}(y)=4\int_{0}^{\frac{\pi}{2}}\int_{0}^{\infty} r^{2(x+y)-1} \cos^{2x-1}\theta \sin^{2y-1}\theta& \\ \nonumber &\times E_{\alpha, \beta}^{\gamma}\left(-r^2\cos^2\theta -\frac{p}{r^2\cos^2\theta}\right)  E_{\alpha, \beta}^{\gamma}\left(-r^2\sin^2\theta -\frac{p}{r^2\sin^2\theta}\right)drd\theta& 
\end{align*}
which completes the proof.
\end{proof}
\begin{rem}
Putting $p=0$ and $\beta=1$ (or $\beta=2$)  in \eqref{5}, we get the classical relation between the gamma and beta functions:
\begin{align*}
B(x,y)=\frac{\Gamma(x)\Gamma(y)}{\Gamma(x+y)}.
\end{align*}
\end{rem}
\begin{theorem}	For the ML-generalized beta function, we have the following summation relation:
	\begin{eqnarray} 
	{}^{M\!L\!\!}{B}_{p}(x,1-y) = \sum_{n=0}^{\infty} \frac{(y)_n}{n!}	{}^{M\!L\!\!}{B}_{p}(x+n,1), \ Re(p)>0.	 
	\end{eqnarray} 
\end{theorem}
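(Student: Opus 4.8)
The plan is to start from the integral definition \eqref{4} of the ML-generalized beta function evaluated at the second argument $1-y$, and then expand the factor $(1-t)^{-y}$ by the generalized binomial theorem. Concretely, the left-hand side is
\[
{}^{M\!L\!\!}{B}_{p}(x,1-y)=\int_{0}^{1}t^{x-1}(1-t)^{-y}E_{\alpha, \beta}^{\gamma}\!\left(-\frac{p}{t(1-t)}\right)dt,
\]
and for $0\le t<1$ one has the expansion
\[
(1-t)^{-y}=\sum_{n=0}^{\infty}\frac{(y)_n}{n!}\,t^{n},
\]
which follows from $\binom{-y}{n}(-1)^n=\frac{(y)_n}{n!}$. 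First I would substitute this series into the integrand.

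Next I would interchange the order of summation and integration, obtaining
\[
{}^{M\!L\!\!}{B}_{p}(x,1-y)=\sum_{n=0}^{\infty}\frac{(y)_n}{n!}\int_{0}^{1}t^{x+n-1}E_{\alpha, \beta}^{\gamma}\!\left(-\frac{p}{t(1-t)}\right)dt.
\]
Then I would recognize each inner integral by setting the second parameter equal to $1$ in the definition \eqref{4}: since $(1-t)^{1-1}=1$, one has
\[
\int_{0}^{1}t^{x+n-1}E_{\alpha, \beta}^{\gamma}\!\left(-\frac{p}{t(1-t)}\right)dt={}^{M\!L\!\!}{B}_{p}(x+n,1).
\]
Substituting this identity termwise yields exactly the claimed summation relation.

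The main obstacle is justifying the interchange of the infinite sum with the integral, since the binomial series converges only for $|t|<1$ and its partial sums blow up as $t\to 1^{-}$. The key point is that, because $Re(p)>0$, the argument $-p/(t(1-t))$ of the Mittag-Leffler kernel tends to $-\infty$ as $t\to 1^{-}$ (and likewise as $t\to 0^{+}$), so that $E_{\alpha, \beta}^{\gamma}$ decays and suppresses the singularity coming from $(1-t)^{-y}$; this should secure absolute convergence of the associated double sum-integral. I would make this rigorous by bounding the kernel on $(0,1)$ and verifying that the series $\sum_{n}\frac{|(y)_n|}{n!}\int_{0}^{1}t^{x+n-1}\,\bigl|E_{\alpha, \beta}^{\gamma}(\,\cdot\,)\bigr|\,dt$ is finite, after which a Fubini–Tonelli (or dominated convergence) argument legitimizes the termwise integration and completes the proof.
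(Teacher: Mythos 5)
Your proposal is correct and follows essentially the same route as the paper: expand $(1-t)^{-y}$ via the binomial series $\sum_{n\ge 0}\frac{(y)_n}{n!}t^n$, interchange sum and integral, and identify each term as ${}^{M\!L\!\!}B_{p}(x+n,1)$. Your extra attention to justifying the interchange (using the decay of the Mittag-Leffler kernel as $t\to 1^{-}$ under $Re(p)>0$) is a point the paper passes over silently, so your write-up is if anything slightly more careful.
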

\begin{proof}
From the definition of the ML-generalized beta function \eqref{4}, we have 
 \begin{align*}
{}^{M\!L\!\!}{B}_{p}(x,1-y)=\int_{0}^{1}t^{x-1}(1-t)^{-y}	E_{\alpha, \beta}^{\gamma}\left(-\frac{p}{t(1-t)}\right)dt.	
\end{align*}
Using the following series expansion
\begin{align*} 
(1-t)^{-y}=\sum_{n=0}^{\infty}(y)_n\frac{t^n}{n!}, \ |t|<1 
\end{align*}
we obtain
\begin{align*}
{}^{M\!L\!\!}{B}_{p}(x,1-y)=\int_{0}^{1}t^{x-1}\left[ \sum_{n=0}^{\infty} \frac{(y)_n}{n!} t^n\right]	E_{\alpha, \beta}^{\gamma}\left(-\frac{p}{t(1-t)}\right) dt. 
\end{align*}
Therefore, interchanging the order of integration and summation and then using definition \eqref{4}, we obtain
	\begin{align*} 
	{}^{M\!L\!\!}{B}_{p}(x,1-y) &= \sum_{n=0}^{\infty} \frac{(y)_n}{n!} \int_{0}^{1}t^{x+n-1}E_{\alpha, \beta}^{\gamma}\left(-\frac{p}{t(1-t)}\right) dt & \\ 
	&=\sum_{n=0}^{\infty} \frac{(y)_n}{n!}	{}^{M\!L\!\!}{B}_{p}(x+n,1)&
	\end{align*}
which completes the proof.
\end{proof}
\begin{theorem} For the ML-generalized beta function, we have the following functional relation:
	\begin{eqnarray}
	{}^{M\!L\!\!}{B}_{p}(x,y) =  {}^{M\!L\!\!}{B}_{p}(x,y+1) +{}^{M\!L\!\!}{B}_{p}(x+1,y), \ Re(p)>0.	
	\end{eqnarray} 
\end{theorem}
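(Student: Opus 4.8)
The plan is to prove the functional relation
\begin{align*}
{}^{M\!L\!\!}{B}_{p}(x,y) =  {}^{M\!L\!\!}{B}_{p}(x,y+1) +{}^{M\!L\!\!}{B}_{p}(x+1,y)
\end{align*}
by working entirely on the integral side and exploiting the elementary factorization of the weight $t^{x-1}(1-t)^{y-1}$. The key observation is the trivial algebraic identity $1 = t + (1-t)$, which I would multiply into the integrand of the right-hand definition \eqref{4}. First I would write out both terms on the right explicitly: starting from definition \eqref{4}, ${}^{M\!L\!\!}{B}_{p}(x,y+1)$ has integrand $t^{x-1}(1-t)^{y}E_{\alpha, \beta}^{\gamma}\bigl(-\tfrac{p}{t(1-t)}\bigr)$, while ${}^{M\!L\!\!}{B}_{p}(x+1,y)$ has integrand $t^{x}(1-t)^{y-1}E_{\alpha, \beta}^{\gamma}\bigl(-\tfrac{p}{t(1-t)}\bigr)$.

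Next I would add the two integrals. Since both integrals are taken over the same interval $[0,1]$ and carry the identical Mittag-Leffler weight factor $E_{\alpha, \beta}^{\gamma}\bigl(-\tfrac{p}{t(1-t)}\bigr)$, I can combine them under a single integral sign and factor out the common terms. The sum of the two power prefactors is $t^{x-1}(1-t)^{y} + t^{x}(1-t)^{y-1} = t^{x-1}(1-t)^{y-1}\bigl[(1-t)+t\bigr] = t^{x-1}(1-t)^{y-1}$, where the bracket collapses to $1$. This leaves exactly the integrand of ${}^{M\!L\!\!}{B}_{p}(x,y)$ as given by \eqref{4}, completing the proof.

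The argument requires no interchange of summation and integration and no convergence subtleties beyond those already guaranteed by the hypotheses $Re(x)>0$, $Re(y)>0$, $Re(p)>0$, $Re(\alpha)>0$ built into definition \eqref{4}; the weight $E_{\alpha, \beta}^{\gamma}\bigl(-\tfrac{p}{t(1-t)}\bigr)$ is common to all three integrals and is simply carried along. There is essentially no main obstacle here: the only point demanding a moment's care is confirming that raising $y$ to $y+1$ in the first term and $x$ to $x+1$ in the second term produces exactly the two summands whose prefactors recombine via $t+(1-t)=1$, so that linearity of the integral immediately yields the stated identity.
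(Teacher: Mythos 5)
Your proposal is correct and uses exactly the same idea as the paper: the factorization $t^{x-1}(1-t)^{y}+t^{x}(1-t)^{y-1}=t^{x-1}(1-t)^{y-1}\bigl[(1-t)+t\bigr]$ together with linearity of the integral and the common Mittag--Leffler weight. The paper merely runs the computation in the opposite direction, inserting $1=t+(1-t)$ into the integrand of ${}^{M\!L\!\!}{B}_{p}(x,y)$ and splitting, so the two arguments are identical in substance.
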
 
\begin{proof}
The left-hand side of \eqref{4} equals
\begin{align*}\nonumber
{}^{M\!L\!\!}{B}_{p}(x,y)&= \int_{0}^{1} t^{x-1}(1-t)^{y-1}	E_{\alpha, \beta}^{\gamma}\left(-\frac{p}{t(1-t)}\right)dt& \\ \nonumber
&=\int_{0}^{1}[t+(1-t)]\left[ t^{x}(1-t)^{y-1}+t^{x-1}(1-t)^{y}\right]	E_{\alpha, \beta}^{\gamma}\left(-\frac{p}{t(1-t)}\right) dt& \\ \nonumber&=  \int_{0}^{1}t^{x}(1-t)^{y-1}	E_{\alpha, \beta}^{\gamma}\left(-\frac{p}{t(1-t)}\right)dt &\\ &+ \nonumber\int_{0}^{1}t^{x-1}(1-t)^{y}	E_{\alpha, \beta}^{\gamma}\left(-\frac{p}{t(1-t)}\right)dt& \\ 
&= {}^{M\!L\!\!}{B}_{p}(x+1,y) + {}^{M\!L\!\!}{B}_{p}(x,y+1) & 
\end{align*} which completes the proof.
\end{proof} 
\begin{theorem}	For the ML-generalized beta function, we have the following summation relation:
	\begin{eqnarray} \label{6}
{}^{M\!L\!\!}{B}_{p}(x,y) = \sum_{n=0}^{\infty} {}^{M\!L\!\!}{B}_{p}(x+n,y+1), \ Re(p)>0.
	\end{eqnarray} 
\end{theorem}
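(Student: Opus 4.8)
The plan is to establish the identity directly from the integral representation \eqref{4}, mirroring the technique used in the proof of the earlier summation relation. First I would write each summand using the definition,
\begin{align*}
\sum_{n=0}^{\infty}{}^{M\!L\!\!}{B}_{p}(x+n,y+1)=\sum_{n=0}^{\infty}\int_{0}^{1}t^{x+n-1}(1-t)^{y}E_{\alpha, \beta}^{\gamma}\left(-\frac{p}{t(1-t)}\right)dt,
\end{align*}
and then interchange the order of summation and integration so as to pull the sum inside against the common factor $t^{x-1}(1-t)^{y}E_{\alpha, \beta}^{\gamma}(-p/(t(1-t)))$.

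Next I would recognize the remaining series as a geometric series: for $t\in(0,1)$ one has $\sum_{n=0}^{\infty}t^{n}=1/(1-t)$. Substituting this gives
\begin{align*}
\sum_{n=0}^{\infty}{}^{M\!L\!\!}{B}_{p}(x+n,y+1)=\int_{0}^{1}t^{x-1}(1-t)^{y}\cdot\frac{1}{1-t}\cdot E_{\alpha, \beta}^{\gamma}\left(-\frac{p}{t(1-t)}\right)dt,
\end{align*}
and the factor $(1-t)^{y}/(1-t)=(1-t)^{y-1}$ collapses the integrand back to that of \eqref{4}, so the right-hand side equals ${}^{M\!L\!\!}{B}_{p}(x,y)$, as desired.

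The step requiring care is the interchange of summation and integration. I would justify it by dominated convergence: the partial sums $\sum_{n=0}^{N}t^{n}=(1-t^{N+1})/(1-t)$ are bounded in absolute value by $1/(1-t)$, so each partial-sum integrand is dominated by $t^{Re(x)-1}(1-t)^{Re(y)-1}\,|E_{\alpha, \beta}^{\gamma}(-p/(t(1-t)))|$. Under the standing hypotheses $Re(x)>0$, $Re(y)>0$, $Re(p)>0$, the factors $t^{Re(x)-1}$ and $(1-t)^{Re(y)-1}$ are integrable near their respective endpoints, while the Mittag-Leffler factor decays at $t=0$ and $t=1$ (where its argument tends to $-\infty$), so the dominating function is integrable on $(0,1)$. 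This legitimizes passing the limit through the integral and completes the proof.

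As an alternative I note the identity also follows by iterating the functional relation of the preceding theorem: repeatedly replacing ${}^{M\!L\!\!}{B}_{p}(x+n,y)$ by ${}^{M\!L\!\!}{B}_{p}(x+n,y+1)+{}^{M\!L\!\!}{B}_{p}(x+n+1,y)$ yields, after $N+1$ steps, the partial sum $\sum_{n=0}^{N}{}^{M\!L\!\!}{B}_{p}(x+n,y+1)$ together with a remainder ${}^{M\!L\!\!}{B}_{p}(x+N+1,y)$, which tends to $0$ as $N\to\infty$ because $t^{x+N}\to0$ on $(0,1)$. The direct geometric-series approach is cleaner, so I would present that one.
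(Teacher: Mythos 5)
Your argument is essentially the paper's own proof run in reverse: the paper expands $(1-t)^{y-1}=(1-t)^{y}\sum_{n\ge 0}t^{n}$ inside the integral \eqref{4} and interchanges sum and integral, which is exactly your geometric-series step read from the other side. Your proposal is correct, and the dominated-convergence justification of the interchange is a welcome addition the paper omits.
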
  
\begin{proof}
From the definition of the ML-generalized beta function \eqref{4}, we have
 \begin{align*} 
{}^{M\!L\!\!}{B}_{p}(x,y)=\int_{0}^{1}t^{x-1}(1-t)^{y-1}	E_{\alpha, \beta}^{\gamma}\left(-\frac{p}{t(1-t)}\right)dt.
\end{align*}
Using  $(1-t)^{y-1}=(1-t)^{y}\sum_{n=0}^{\infty}{t^n}, \; \;  (|t|<1)$, we get 
\begin{align*} {}^{M\!L\!\!}{B}_{p}(x,y)=\int_{0}^{1}t^{x-1}(1-t)^{y}\left[ \sum_{n=0}^{\infty} t^n\right]E_{\alpha, \beta}^{\gamma}\left(-\frac{p}{t(1-t)}\right)dt. 
\end{align*}
Interchanging the order of integration and summation, we get
\begin{align*} \nonumber
{}^{M\!L\!\!}{B}_{p}(x,y) &=  \sum_{n=0}^{\infty} \int_{0}^{1}t^{x+n-1}(1-t)^{y}E_{\alpha, \beta}^{\gamma}\left(-\frac{p}{t(1-t)}\right) dt& \\ 
&=\sum_{n=0}^{\infty}{}^{M\!L\!\!}{B}_{p}(x+n,y+1)&
\end{align*}
which completes the proof.
\end{proof} 
The ML-generalized beta function is represented by the various following integral representations:
\begin{theorem} Let $Re(p)>0,\ Re(x)>0,\ Re(y)>0,\ Re(\alpha)>0$, then we have the following integral representations for ML-generalized beta function:
	\begin{align} 
	\label{7} {}^{M\!L\!\!}{B}_{p}(x,y)&=2\int_{0}^{\frac{\pi}{2}} \cos^{2x-1}\theta \sin^{2y-1}\theta E_{\alpha, \beta}^{\gamma}(-p\sec^{2}\theta\csc^{2}\theta)d\theta,&  \\\label{8} 
	{}^{M\!L\!\!}{B}_{p}(x,y)&=\int_{0}^{\infty} \frac{u^{x-1}}{(1+u)^{x+y}} E_{\alpha, \beta}^{\gamma}\left[-p\left(2+u+\frac{1}{u}\right)\right]du,& \\ \label{9} 
	{}^{M\!L\!\!}{B}_{p}(x,y)&=2^{1-x-y}\int_{-1}^{1} (1+u)^{x-1} (1-u)^{y-1}E_{\alpha, \beta}^{\gamma}\left(-\frac{4p}{(1-u^2)}\right)du,& \\ \label{10}
	{}^{M\!L\!\!}{B}_{p}(x,y)&=(c-a)^{1-x-y}\int_{a}^{c} (u-a)^{x-1} (c-u)^{y-1} E_{\alpha, \beta}^{\gamma}\left(-\frac{p(c-a)^2}{(u-a)(c-u)}\right)du.& 
	\end{align}
\end{theorem}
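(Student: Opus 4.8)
The plan is to derive all four integral representations from the definition \eqref{4} by a sequence of elementary changes of variable, each chosen to transform the standard interval $[0,1]$ into the desired domain while carrying the factor $t(1-t)$ in the denominator of the Mittag-Leffler argument into the prescribed form. The starting point in every case is
\begin{align*}
{}^{M\!L\!\!}{B}_{p}(x,y)=\int_{0}^{1}t^{x-1}(1-t)^{y-1}E_{\alpha, \beta}^{\gamma}\left(-\frac{p}{t(1-t)}\right)dt.
\end{align*}
For \eqref{7} I would substitute $t=\cos^{2}\theta$, so that $1-t=\sin^{2}\theta$ and $dt=-2\cos\theta\sin\theta\,d\theta$, with $\theta$ running from $\tfrac{\pi}{2}$ down to $0$. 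Then $t^{x-1}(1-t)^{y-1}=\cos^{2x-2}\theta\sin^{2y-2}\theta$, and combining with the Jacobian gives $2\cos^{2x-1}\theta\sin^{2y-1}\theta$; since $\tfrac{1}{t(1-t)}=\tfrac{1}{\cos^{2}\theta\sin^{2}\theta}=\sec^{2}\theta\csc^{2}\theta$, the argument of $E_{\alpha,\beta}^{\gamma}$ becomes $-p\sec^{2}\theta\csc^{2}\theta$, which is exactly \eqref{7}.

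For \eqref{8} I would use the substitution $t=\tfrac{u}{1+u}$, which maps $u\in(0,\infty)$ onto $t\in(0,1)$, with $1-t=\tfrac{1}{1+u}$ and $dt=\tfrac{du}{(1+u)^{2}}$. A direct computation gives $t^{x-1}(1-t)^{y-1}dt=\tfrac{u^{x-1}}{(1+u)^{x+y}}\,du$, while $\tfrac{1}{t(1-t)}=\tfrac{(1+u)^{2}}{u}=2+u+\tfrac{1}{u}$, yielding the argument $-p\bigl(2+u+\tfrac{1}{u}\bigr)$ as in \eqref{8}. For \eqref{9} I would set $t=\tfrac{1+u}{2}$, so that $1-t=\tfrac{1-u}{2}$, $dt=\tfrac{1}{2}\,du$, and $u$ ranges over $(-1,1)$; here $t^{x-1}(1-t)^{y-1}dt=2^{1-x-y}(1+u)^{x-1}(1-u)^{y-1}\,du$ and $\tfrac{1}{t(1-t)}=\tfrac{4}{(1+u)(1-u)}=\tfrac{4}{1-u^{2}}$, giving \eqref{9}.

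For the most general form \eqref{10} I would apply the affine substitution $t=\tfrac{u-a}{c-a}$, mapping $u\in(a,c)$ onto $t\in(0,1)$, with $1-t=\tfrac{c-u}{c-a}$ and $dt=\tfrac{du}{c-a}$; then $t^{x-1}(1-t)^{y-1}dt=(c-a)^{1-x-y}(u-a)^{x-1}(c-u)^{y-1}\,du$ and $\tfrac{1}{t(1-t)}=\tfrac{(c-a)^{2}}{(u-a)(c-u)}$, producing \eqref{10}. Each representation is then a single line once the substitution is in place, so there is no serious analytic obstacle; the only point requiring minor care is bookkeeping of the exponents and the Jacobian so that the prefactors $2$, $2^{1-x-y}$, and $(c-a)^{1-x-y}$ come out correctly, together with the orientation of the limits in \eqref{7} where the sign of $dt$ must be absorbed by reversing the endpoints. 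The convergence conditions $Re(x)>0$, $Re(y)>0$, $Re(p)>0$, $Re(\alpha)>0$ are inherited directly from those guaranteeing convergence of \eqref{4}, since each substitution is a diffeomorphism of the relevant open intervals.
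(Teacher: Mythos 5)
Your proposal is correct and uses exactly the same substitutions as the paper's proof ($t=\cos^{2}\theta$, $t=\tfrac{u}{1+u}$, $t=\tfrac{1+u}{2}$, $t=\tfrac{u-a}{c-a}$ applied to \eqref{4}); the paper merely states these transformations without carrying out the details, which you have verified correctly.
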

\begin{proof}
	Equations \eqref{7},\eqref{8}, \eqref{9} and \eqref{10} can be obtain by using the transformations $t=\cos^2\theta$, $t=\frac{u}{1+u}$, $t=\frac{1+u}{2}$ and $t=\frac{u-a}{c-a}$ in equation \eqref{4} respectively.
\end{proof}
\begin{theorem} Mellin transform of the ML-generalized beta function is given by 
\begin{align} \label{11}
\mathfrak{M}\left[{}^{M\!L\!\!}{B}_{p}(x,y) : s\right]={}^{M\!L\!}\Gamma(s)B(x+s,y+s).
\end{align} 
$$ (Re(s)>0,\ Re(p)>0,\ Re(x+ s)>0,\ Re(y+ s)>0,\ Re(\alpha)>0) $$
\end{theorem}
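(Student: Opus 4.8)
The plan is to unwind the definition of the Mellin transform taken with respect to the variable $p$, namely $\mathfrak{M}[f(p):s]=\int_{0}^{\infty}p^{s-1}f(p)\,dp$, and to substitute the defining integral \eqref{4} of ${}^{M\!L\!\!}{B}_{p}(x,y)$. This turns the left-hand side into the double integral $\int_{0}^{\infty}\!\int_{0}^{1}p^{s-1}t^{x-1}(1-t)^{y-1}E_{\alpha,\beta}^{\gamma}\!\left(-\frac{p}{t(1-t)}\right)dt\,dp$ over $(p,t)\in(0,\infty)\times(0,1)$. The first substantive step is to interchange the order of the $p$- and $t$-integrations so that the $p$-integral becomes the inner one.

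Once the order is reversed, I would evaluate the inner $p$-integral for fixed $t$. Putting $u=p/(t(1-t))$, so that $dp=t(1-t)\,du$ and $p^{s-1}=(t(1-t))^{s-1}u^{s-1}$, the factor $(t(1-t))^{s}$ pulls out and what remains is $\int_{0}^{\infty}u^{s-1}E_{\alpha,\beta}^{\gamma}(-u)\,du$. This last integral is precisely ${}^{M\!L\!}\Gamma(s)$, i.e. the ML-generalized gamma function \eqref{3} at $p=0$ with argument $s$, and, crucially, it no longer depends on $t$, so it factors out of the outer integral as the constant ${}^{M\!L\!}\Gamma(s)$.

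What is then left is $\int_{0}^{1}t^{x-1}(1-t)^{y-1}(t(1-t))^{s}\,dt=\int_{0}^{1}t^{(x+s)-1}(1-t)^{(y+s)-1}\,dt$, which by \eqref{1} is exactly the classical beta function $B(x+s,y+s)$; here the hypotheses $Re(x+s)>0$ and $Re(y+s)>0$ are what secure convergence of this beta integral at the endpoints. Assembling the two factors gives ${}^{M\!L\!}\Gamma(s)B(x+s,y+s)$, which is the asserted identity \eqref{11}.

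The main obstacle I anticipate is rigorously justifying the interchange of the two integrations, which rests on absolute convergence of the double integral. Near $u=0$ the integrand $u^{s-1}E_{\alpha,\beta}^{\gamma}(-u)$ is tamed by $Re(s)>0$, since $E_{\alpha,\beta}^{\gamma}(0)=1/\Gamma(\beta)$ is finite; integrability as $u\to\infty$ instead relies on the algebraic decay of the Prabhakar function $E_{\alpha,\beta}^{\gamma}(-u)$ valid for $Re(\alpha)>0$. Once Fubini's theorem is invoked under these conditions, the remaining manipulations are the elementary change of variables and the recognition of standard integrals described above.
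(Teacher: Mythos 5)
Your proposal is correct and follows essentially the same route as the paper's own proof: interchange the order of integration, substitute $u=p/(t(1-t))$ to factor the double integral into $\int_{0}^{\infty}u^{s-1}E_{\alpha,\beta}^{\gamma}(-u)\,du={}^{M\!L\!}\Gamma(s)$ times the classical beta integral $B(x+s,y+s)$. Your added attention to justifying the interchange via absolute convergence is slightly more careful than the paper, which simply appeals to uniform convergence.
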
 
\begin{proof}
Taking Mellin transform of ML-generalized beta function, we get 
\begin{align*}	
\mathfrak{M}\left[ {}^{M\!L\!\!}{B}_{p}(x,y) : s\right]=\int_{0}^{\infty} p^{s-1} \left[\int_{0}^{1} t^{x-1}(1-t)^{y-1}E_{\alpha, \beta}^{\gamma}\left(-\frac{p}{t(1-t)}\right)dt\right]dp. 
\end{align*}
From the uniform convergence of the integral, the order of integration can be interchanged. Therefore, we have 
\begin{align*} 		
\mathfrak{M}\left[ {}^{M\!L\!\!}{B}_{p}(x,y) : s\right]=   \int_{0}^{1}t^{x-1}(1-t)^{y-1}\left[\int_{0}^{\infty} p^{s-1}E_{\alpha, \beta}^{\gamma}\left(-\frac{p}{t(1-t)}\right)dp\right]dt. 	
\end{align*}
Now using the one-to-one transformation $u=\frac{p}{t(1-t)}$, we get 
 \begin{align*}  \nonumber
\mathfrak{M}\left[ {}^{M\!L\!\!}{B}_{p}(x,y) : s\right]&= \int_{0}^{1}t^{x+s-1}(1-t)^{y+s-1}dt \times \int_{0}^{\infty} u^{s-1}E_{\alpha, \beta}^{\gamma}\left(-u\right) du& \\ \nonumber &={}^{M\!L\!}\Gamma(s)\int_{0}^{1}t^{x+s-1}(1-t)^{y+s-1}dt &\\
&= {}^{M\!L\!}\Gamma(s)B(x+s,y+s)&	
\end{align*}
which completes the proof. 
\end{proof} 
\begin{coro}By the Mellin inversion formula, we have the following complex integral representation for ML-generalized beta function:
\begin{eqnarray}
 {}^{M\!L\!\!}{B}_{p}(x,y)=\frac{1}{2\pi i} \int_{-i\infty}^{+i \infty} {}^{M\!L\!}\Gamma(s)B(x+s,y+s)p^{-s}ds
\end{eqnarray}
\end{coro}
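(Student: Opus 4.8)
The plan is to obtain this corollary as an immediate consequence of the Mellin transform identity \eqref{11} established in the preceding theorem, by appealing to the Mellin inversion formula. First I would recall the statement of Mellin inversion: if $F(s)=\mathfrak{M}\left[f(p):s\right]=\int_{0}^{\infty}p^{s-1}f(p)\,dp$ is the Mellin transform of a function $f$, then under suitable regularity and decay conditions on $F$ the original function is recovered by the contour integral
\begin{align*}
f(p)=\frac{1}{2\pi i}\int_{c-i\infty}^{c+i\infty}F(s)\,p^{-s}\,ds,
\end{align*}
where the vertical line $\mathrm{Re}(s)=c$ lies inside the fundamental strip of analyticity of $F$.

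Next I would set $f(p)={}^{M\!L\!\!}{B}_{p}(x,y)$, so that by \eqref{11} the associated transform is $F(s)={}^{M\!L\!}\Gamma(s)\,B(x+s,y+s)$. Substituting this $F(s)$ directly into the inversion formula yields
\begin{align*}
{}^{M\!L\!\!}{B}_{p}(x,y)=\frac{1}{2\pi i}\int_{-i\infty}^{+i\infty}{}^{M\!L\!}\Gamma(s)\,B(x+s,y+s)\,p^{-s}\,ds,
\end{align*}
which is exactly the claimed representation. Since the statement writes the contour as the imaginary axis, I would note that the contour is taken in the half-plane of convergence determined by the constraints $\mathrm{Re}(s)>0$, $\mathrm{Re}(x+s)>0$ and $\mathrm{Re}(y+s)>0$ already recorded in the hypotheses of the Mellin transform theorem.

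The only genuine content beyond this direct substitution is verifying the hypotheses of the inversion theorem, so that is where I would place the bulk of any rigorous argument: one must confirm that ${}^{M\!L\!\!}{B}_{p}(x,y)$, as a function of $p$, is continuous and of suitable growth, and that $F(s)={}^{M\!L\!}\Gamma(s)B(x+s,y+s)$ is analytic and decays sufficiently fast along vertical lines for the inverse integral to converge. The uniform convergence of the defining integral, which was already invoked to interchange the order of integration in the proof of \eqref{11}, supplies precisely the regularity needed, so I expect this verification to be the main (but routine) obstacle. Given that, the corollary follows at once, and no further computation is required.
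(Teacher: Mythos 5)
Your proposal is correct and matches the paper's approach exactly: the paper's entire proof is ``Taking inversion of \eqref{11}, we get the result,'' i.e.\ a direct application of the Mellin inversion formula to the transform identity of the preceding theorem. Your additional remarks about verifying the regularity and decay hypotheses go beyond what the paper records, but the route is the same.
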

\begin{proof}
Taking inversion of \eqref{11}, we get the result.
\end{proof}
\section{ML-generalizations of hypergeometric functions}
In this section, using the ML-generalized beta function \eqref{4} we define the ML-generalizations of Gauss hypergeometric and confluent hypergeometric functions as 
\begin{align} \label{12}
&{}^{M\!L\!}\textrm{F}_{p}(a,b;c;z)= \sum_{n=0}^{\infty} (a)_n \frac{{}^{M\!L\!\!}B_{p}(b+n,\;c-b)}{B(b,\;c-b)} \frac{z^n}{n!}& \\ \nonumber  
&(p\geq0; \ |z|<1;\ Re(c)>Re(b)>0,\ Re(\alpha)>0)&
\end{align}   
and
\begin{align} \label{13}
{}^{M\!L\!}{\Phi}_{p}(b;c;z)= \sum_{n=0}^{\infty}\frac{{}^{M\!L\!\!}B_{p}(b+n,\;c-b)}{B(b,\;c-b)}  \frac{z^n}{n!} \\ \nonumber
(p\geq0;\ Re(c)>Re(b)>0,\ Re(\alpha)>0)
\end{align} 
respectively. 

Clearly, when $\alpha=\beta=\gamma=1$, equations \eqref{12} and \eqref{13} recudes to \eqref{27} and \eqref{28}. Also, when $p=0$ and $\beta=1$ (or $\beta=2$) , recudes to classical hypergeometric and confluent hypergeometric functions \cite{1}. 
\begin{theorem}
Let $p>0;\ p=0,\ |\arg(1-z)|<\pi;\ Re(c)>Re(b)>0,\ Re(\alpha)>0$, then we have the following integral representations for ML-generalized GHF:
\begin{align} \label{14} \nonumber
{}^{M\!L\!\!}\textrm{F}_{p}(&a,b;c;z)=\frac{1}{B(b,c-b)} \int_{0}^{1}t^{b-1}(1-t)^{c-b-1}(1-zt)^{-a}	E_{\alpha, \beta}^{\gamma}\left(-\frac{p}{t(1-t)}\right)dt,
\end{align}
\begin{align}  \nonumber 
{}^{M\!L\!\!}\textrm{F}_{p}(a,b;c;z)= \frac{1}{B(b,c-b)} \int_{0}^{\infty}u^{b-1}&(1+u)^{a-c}(1+u(1-z))^{-a}E_{\alpha, \beta}^{\gamma}\left(-p\left(2+u+\frac{1}{u}\right)\right)du,& 
\end{align}
\begin{align} \nonumber
 {}^{M\!L\!\!}\textrm{F}_{p}(a,b;c;z)= \frac{2}{B(b,c-b)} \int_{0}^{\frac{\pi}{2}}\sin^{2b-1}v&\cos^{2c-2b-1}v(1-z\sin^{2}v)^{-a} E_{\alpha, \beta}^{\gamma}\left(-p\sec^{2}v\csc^{2}v\right)du,&
\end{align}
\begin{align} \nonumber
{}^{M\!L\!\!}\textrm{F}_{p}(a,b;c;z)= \frac{2}{B(b,c-b)} \int_{0}^{\infty}\sinh^{2b-1}v&\cosh^{2a-2c+1}v(\cosh^{2}v-z\sinh^{2}v)^{-a}& \\	&\times E_{\alpha, \beta}^{\gamma}\left(-p\cosh^{2}v\coth^{2}v\right)du.&
\end{align}
\end{theorem}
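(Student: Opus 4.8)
The plan is to establish the first integral representation straight from the series definition \eqref{12}, and then to obtain the remaining three by the same elementary substitutions that were used for the ML-generalized beta function in \eqref{7}--\eqref{10}.

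First I would replace each coefficient ${}^{M\!L\!}B_{p}(b+n,c-b)$ in \eqref{12} by its defining integral \eqref{4}, namely
\[
{}^{M\!L\!}B_{p}(b+n,c-b)=\int_{0}^{1}t^{b+n-1}(1-t)^{c-b-1}E_{\alpha,\beta}^{\gamma}\!\left(-\frac{p}{t(1-t)}\right)dt .
\]
Pulling the common factor $1/B(b,c-b)$ out front, writing $t^{b+n-1}=t^{b-1}t^{n}$, and interchanging the order of summation and integration leaves the inner sum $\sum_{n=0}^{\infty}(a)_n (zt)^n/n!$ inside the integral. Recognising this as the binomial series $(1-zt)^{-a}$ (valid for $|zt|<1$) gives
\[
{}^{M\!L\!}\textrm{F}_{p}(a,b;c;z)=\frac{1}{B(b,c-b)}\int_{0}^{1}t^{b-1}(1-t)^{c-b-1}(1-zt)^{-a}E_{\alpha,\beta}^{\gamma}\!\left(-\frac{p}{t(1-t)}\right)dt,
\]
which is the first representation; the extension from $|z|<1$ to $|\arg(1-z)|<\pi$ (when $p=0$) then follows by analytic continuation in $z$.

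The other three representations require no new ideas: each is obtained from the first by a change of variable, tracking only the Jacobian and the resulting exponents. Putting $t=u/(1+u)$, so that $1-t=1/(1+u)$, $t(1-t)=u/(1+u)^2$ and $dt=du/(1+u)^2$, maps $[0,1]$ onto $[0,\infty)$ and, after collecting the powers of $(1+u)$, yields the second form with argument $-p\left(2+u+\tfrac1u\right)$. The substitution $t=\sin^2 v$ gives $1-t=\cos^2 v$ and $t(1-t)=\sin^2 v\cos^2 v$, producing the trigonometric form with $E_{\alpha,\beta}^{\gamma}(-p\sec^2 v\csc^2 v)$; and $t=\tanh^2 v$, for which $1-t=\mathrm{sech}^2 v$ and $t(1-t)=\sinh^2 v/\cosh^4 v$, produces the hyperbolic form with $E_{\alpha,\beta}^{\gamma}(-p\cosh^2 v\coth^2 v)$, the factor of $2$ in the last two formulas coming from the Jacobians $dt=2\sin v\cos v\,dv$ and $dt=2\sinh v\cosh^{-3}v\,dv$.

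The one step that genuinely needs justification is the interchange of summation and integration. This is secured by uniform convergence: for $|z|<1$ and $t\in[0,1]$ one has $|zt|\le|z|<1$, so $\sum_n (a)_n (zt)^n/n!$ converges uniformly in $t$, while for $p>0$ the Mittag-Leffler factor $E_{\alpha,\beta}^{\gamma}(-p/(t(1-t)))$ decays as $t\to0^{+}$ and $t\to1^{-}$ and keeps the integrals absolutely convergent; the boundary case $p=0$ is covered by the hypotheses $\mathrm{Re}(c)>\mathrm{Re}(b)>0$. Once the interchange is licensed, everything else is pure bookkeeping.
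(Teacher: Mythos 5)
Your proposal is correct and follows essentially the same route as the paper: derive the first representation by inserting the integral definition of ${}^{M\!L\!\!}B_{p}(b+n,c-b)$ into the series \eqref{12}, interchange summation and integration, sum the binomial series to $(1-zt)^{-a}$, and then obtain the other three forms by the substitutions $t=u/(1+u)$ (equivalently the paper's $u=t/(1-t)$), $t=\sin^{2}v$, and $t=\tanh^{2}v$. Your added justification of the interchange and the analytic continuation in $z$ is more careful than the paper's, which performs these steps without comment.
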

\begin{proof}
Direct calculations yield
\begin{align} \nonumber
&{}^{M\!L\!}\textrm{F}_{p}(a,b;c;z)= \sum_{n=0}^{\infty} (a)_n \frac{{}^{M\!L\!\!}B_{p}(b+n,\;c-b)}{B(b,\;c-b)}  \frac{z^n}{n!} & \\ \nonumber &=\frac{1}{B(b,c-b)} \sum_{n=0}^{\infty} (a)_n \int_{0}^{1}t^{b+n-1}(1-t)^{c-b-1}	E_{\alpha, \beta}^{\gamma}\left(-\frac{p}{t(1-t)}\right)\frac{z^n}{n!}dt& \\ \nonumber
&=\frac{1}{B(b,c-b)}  \int_{0}^{1}t^{b-1}(1-t)^{c-b-1}	E_{\alpha, \beta}^{\gamma}\left(-\frac{p}{t(1-t)}\right)\sum_{n=0}^{\infty} (a)_n\frac{{(zt)}^n}{n!}dt& \\ \nonumber
&=\frac{1}{B(b,c-b)}  \int_{0}^{1}t^{b-1}(1-t)^{c-b-1}	E_{\alpha, \beta}^{\gamma}\left(-\frac{p}{t(1-t)}\right)(1-zt)^{-a}dt.&
\end{align}
Setting $u=\frac{t}{1-t}$ in \eqref{14}, we get 
\begin{align}  \nonumber 
{}^{M\!L\!}\textrm{F}_{p}(a,b;c;z)= \frac{1}{B(b,c-b)} \int_{0}^{\infty}u^{b-1}(&1+u)^{a-c}(1+u(1-z))^{-a}E_{\alpha, \beta}^{\gamma}\left(-p\left(2+u+\frac{1}{u}\right)\right)du.& 
\end{align}
Substituting $t=\sin^{2}v$ in \eqref{14}, we have 
\begin{align} \nonumber
{}^{M\!L\!}\textrm{F}_{p}(a,b;c;z)= \frac{2}{B(b,c-b)} \int_{0}^{\frac{\pi}{2}}\sin^{2b-1}v&\cos^{2c-2b-1}v(1-z\sin^{2}v))^{-a} E_{\alpha, \beta}^{\gamma}\left(-p\sec^{2}v\csc^{2}v\right)du.&
\end{align}
On the other hand, substituting $t=\tanh^{2}v$ in \eqref{14}, we get
\begin{align} \nonumber
{}^{M\!L\!}\textrm{F}_{p}(a,b;c;z)= \frac{2}{B(b,c-b)} \int_{0}^{\infty}\sinh^{2b-1}v&\cosh^{2a-2c+1}v(\cosh^{2}v-z\sinh^{2}v)^{-a}& \\  \nonumber 	& \times E_{\alpha, \beta}^{\gamma}\left(-p\cosh^{2}v\coth^{2}v\right)du&
\end{align}
which completes the proof. 
\end{proof}
\begin{theorem} For the ML-generalized CHF, we have the following integral representations:
\begin{align} \label{15} \nonumber
{}^{M\!L\!}{\Phi}_{p}(b;c;z)=&\frac{1}{B(b,c-b)} \int_{0}^{1}t^{b-1}(1-t)^{c-b-1}e^{zt}	E_{\alpha, \beta}^{\gamma}\left(-\frac{p}{t(1-t)}\right)dt,& \\  {}^{M\!L\!}{\Phi}_{p}(b;c;z)=&\frac{1}{B(b,c-b)} \int_{0}^{1}t^{c-b-1}(1-t)^{b-1}e^{z(1-t)}	E_{\alpha, \beta}^{\gamma}\left(-\frac{p}{t(1-t)}\right)dt.&
\\ \nonumber &(p\ge0;\; Re(c)>Re(b)>0,\ Re(\alpha)>0)& 
\end{align}
\end{theorem}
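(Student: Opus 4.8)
The plan is to mimic the derivation of the integral representation for the ML-generalized GHF established in the preceding theorem, the only structural change being that the defining series \eqref{13} of ${}^{M\!L\!}{\Phi}_{p}$ carries no Pochhammer factor $(a)_n$. First I would start from \eqref{13} and replace each coefficient ${}^{M\!L\!\!}{B}_{p}(b+n,c-b)$ by its integral form \eqref{4}, obtaining
\begin{align*}
{}^{M\!L\!}{\Phi}_{p}(b;c;z)=\frac{1}{B(b,c-b)}\sum_{n=0}^{\infty}\frac{z^n}{n!}\int_{0}^{1}t^{b+n-1}(1-t)^{c-b-1}E_{\alpha, \beta}^{\gamma}\left(-\frac{p}{t(1-t)}\right)dt.
\end{align*}

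Next I would interchange the order of summation and integration and collect the resulting power series inside the integrand. Because the factor $(a)_n$ is now absent, the inner sum collapses to the exponential,
\begin{align*}
\sum_{n=0}^{\infty}\frac{(zt)^n}{n!}=e^{zt},
\end{align*}
which immediately yields the first claimed representation. The main point requiring care, and the only step I expect to be a genuine obstacle, is justifying the termwise integration. For $p>0$ the factor $E_{\alpha, \beta}^{\gamma}(-p/(t(1-t)))$ is bounded on $(0,1)$ and decays at the endpoints $t=0,1$, while for fixed $t\in(0,1)$ the series $\sum_n z^n t^{b+n-1}/n!$ converges absolutely for every $z$; dominating the partial sums by an integrable majorant such as $e^{|z|}t^{Re(b)-1}(1-t)^{Re(c-b)-1}$ times the bounded Mittag-Leffler factor lets me invoke dominated convergence (equivalently Fubini--Tonelli after passing to absolute values) under the hypotheses $Re(c)>Re(b)>0$. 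For the boundary case $p=0$ the endpoint decay disappears, so convergence of the beta-type integral rests solely on $Re(c)>Re(b)>0$, which is precisely what the stated hypotheses guarantee.

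Finally, to obtain the second representation I would apply the substitution $t\mapsto 1-t$ to the first integral. Under this change of variable the factor $t^{b-1}(1-t)^{c-b-1}$ becomes $t^{c-b-1}(1-t)^{b-1}$, the exponential $e^{zt}$ becomes $e^{z(1-t)}$, and the argument of the Mittag-Leffler function is left unchanged because $t(1-t)$ is invariant under $t\mapsto 1-t$. This symmetry is exactly what makes the second formula follow with essentially no further work once the first has been established.
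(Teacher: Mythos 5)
Your proposal is correct and follows essentially the same route as the paper, which simply states that "a similar procedure" to the GHF case (substituting the integral form \eqref{4} into the series \eqref{13}, interchanging summation and integration, and summing $\sum_n (zt)^n/n! = e^{zt}$) yields the result. Your additional observations --- the justification of termwise integration and the symmetry $t\mapsto 1-t$ leaving $t(1-t)$ invariant for the second representation --- merely make explicit what the paper leaves implicit.
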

\begin{proof}
A similar procedure yields an integral representations of the ML-generalized CHF by using the definition of the ML-generalized beta function \eqref{4}.
\end{proof}

The differentiation formulas for the ML-generalized GHF and CHF can be obtained by differentiating \eqref{12} and \eqref{13} with respect to the variable $z$ in terms of a parameter by using the formulas:
\begin{eqnarray} \label{16}
B(b,\; c-b)= \frac{c}{b}B(b+1,\; c-b)
\end{eqnarray}
and
\begin{eqnarray}
(a)_{n+1}=a(a+1)_n.
\end{eqnarray}
\begin{theorem}
For the ML-generalized GHF, we have the following differentiation formula:
\begin{eqnarray} \label{17}
\frac{d^n}{dz^n}\left[{}^{M\!L\!\!}\textrm{F}_{p}(a,b;c;z) \right] = \frac{(b)_n(a)_n}{(c)_n} {}^{M\!L\!\!}\textrm{F}_{p}(a+n,b+n;c+n;z).
\end{eqnarray}
\end{theorem}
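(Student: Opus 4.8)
The plan is to establish the $n=1$ case directly from the series definition \eqref{12} and then lift it to arbitrary $n$ by a one-step induction. To keep the summation index separate from the order of differentiation, I write the defining series with index $k$:
\[
{}^{M\!L\!}\textrm{F}_{p}(a,b;c;z)= \sum_{k=0}^{\infty} (a)_k \frac{{}^{M\!L\!\!}B_{p}(b+k,\;c-b)}{B(b,\;c-b)} \frac{z^k}{k!}.
\]
First I would differentiate termwise with respect to $z$, which annihilates the $k=0$ term and leaves $\sum_{k=1}^{\infty}(a)_k \frac{{}^{M\!L\!\!}B_{p}(b+k,c-b)}{B(b,c-b)}\frac{z^{k-1}}{(k-1)!}$. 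Reindexing $k\mapsto k+1$ and applying the Pochhammer identity $(a)_{k+1}=a\,(a+1)_k$ recasts this as $a\sum_{k=0}^{\infty}(a+1)_k \frac{{}^{M\!L\!\!}B_{p}(b+k+1,c-b)}{B(b,c-b)}\frac{z^k}{k!}$.

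Next I would compare this against the target series ${}^{M\!L\!}\textrm{F}_{p}(a+1,b+1;c+1;z)$. Since $(c+1)-(b+1)=c-b$, the second argument of the ML-beta factor is unchanged and the first argument is $b+1+k$, so the only discrepancy is that the normalizing constant reads $B(b,c-b)$ rather than $B(b+1,c-b)$. Rewriting it through the ratio \eqref{16}, $B(b,c-b)=\tfrac{c}{b}B(b+1,c-b)$, supplies exactly the factor $\tfrac{b}{c}$ and yields the base case $\frac{d}{dz}{}^{M\!L\!}\textrm{F}_{p}(a,b;c;z)=\tfrac{ab}{c}\,{}^{M\!L\!}\textrm{F}_{p}(a+1,b+1;c+1;z)$, which is \eqref{17} for $n=1$. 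I would then induct: assuming \eqref{17} for a given $n$, I apply $\frac{d}{dz}$ once more and feed ${}^{M\!L\!}\textrm{F}_{p}(a+n,b+n;c+n;z)$ into the base case, obtaining the factor $\tfrac{(a+n)(b+n)}{c+n}$ in front of ${}^{M\!L\!}\textrm{F}_{p}(a+n+1,b+n+1;c+n+1;z)$. Collapsing the products via $(a)_n(a+n)=(a)_{n+1}$, and likewise for $b$ and $c$, advances the exponent to $n+1$ and closes the induction.

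The only genuine obstacle is analytic: justifying the termwise differentiation. For this I would observe that when $Re(c)>Re(b)>0$ and $p\ge 0$, the Mittag-Leffler weight $E_{\alpha,\beta}^{\gamma}\!\left(-\tfrac{p}{t(1-t)}\right)$ in \eqref{4} is bounded on $[0,1]$, so each ${}^{M\!L\!\!}B_{p}(b+k,c-b)$ stays controlled by the corresponding classical beta value; consequently the coefficients of \eqref{12} grow no faster than those of the ordinary ${}_2\textrm{F}_1$, and both the series and its formal derivative converge uniformly on compact subsets of $|z|<1$. Termwise differentiation is then legitimate by the standard theorem on uniformly convergent series of analytic functions, which makes the formal manipulations above rigorous throughout the disc.
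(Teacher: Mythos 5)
Your argument is essentially the paper's own proof: the paper likewise differentiates the defining series termwise, reindexes, and uses $(a)_{n+1}=a(a+1)_n$ together with $B(b,c-b)=\tfrac{c}{b}B(b+1,c-b)$ to obtain the $n=1$ case, then passes to general $n$ by "recursive application," which is exactly your induction made explicit. Your added remarks on justifying termwise differentiation go beyond what the paper records but do not change the route.
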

\begin{proof}
Taking the derivative of ${}^{M\!L\!}\textrm{F}_{p}(a,b;c;z)$ with respect to $z$, we obtain 
\begin{align*} \nonumber
\frac{d}{dz}\left[{}^{M\!L\!}\textrm{F}_{p}(a,b;c;z) \right] &= \frac{d}{dz} \left[\sum_{n=0}^{\infty} (a)_n \frac{{}^{M\!L\!\!}B_{p}(b+n,\;c-b)}{B(b,\;c-b)}  \frac{z^n}{n!}  \right]& \\ &= \sum_{n=1}^{\infty} (a)_n \frac{{}^{M\!L\!\!}B_{p}(b+n,\;c-b)}{B(b,\;c-b)}  \frac{z^{n-1}}{(n-1)!}.&
\end{align*} 
Replacing $n\to n+1$, we get
\begin{align*} \nonumber
\frac{d}{dz}\left[{}^{M\!L\!}\textrm{F}_{p}(a,b;c;z) \right] &= \frac{ba}{c} \left[\sum_{n=0}^{\infty} (a+1)_n \frac{{}^{M\!L\!\!}B_{p}(b+n+1,\;c-b)}{B(b+1,\;c-b)}  \frac{z^n}{n!}  \right]& \\ &= \frac{ba}{c}{}^{M\!L\!}\textrm{F}_{p}(a+1,b+1;c+1;z).&
\end{align*} 
Recursive application of this procedure gives us the general form:
\begin{eqnarray*}
\frac{d^n}{dz^n}\left[{}^{M\!L\!}\textrm{F}_{p}(a,b;c;z) \right] = \frac{(b)_n(a)_n}{(c)_n} {}^{M\!L\!}\textrm{F}_{p}(a+n,b+n;c+n;z)
\end{eqnarray*}
which completes the proof.
\end{proof}
\begin{theorem}
For the ML-generalized CHF, we have the following differentiation formula:
\begin{eqnarray} \label{18}
\frac{d^n}{dz^n}\left[{}^{M\!L\!}{\Phi}_{p}(b;c;z) \right] = \frac{(b)_n}{(c)_n} {}^{M\!L\!}{\Phi}_{p}(b+n;c+n;z).
\end{eqnarray}
\end{theorem}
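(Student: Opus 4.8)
The plan is to follow the template of the preceding theorem for the ML-generalized GHF, specialized to the case where no $(a)_n$ factor is present, so that only the beta-function identity \eqref{16} is needed rather than the additional Pochhammer shift $(a)_{n+1}=a(a+1)_n$. First I would differentiate the defining series \eqref{13} term by term in $z$; this is legitimate because that series converges locally uniformly in $z$. The $n=0$ term is constant and drops out, leaving
\[
\frac{d}{dz}\left[{}^{M\!L\!}{\Phi}_{p}(b;c;z)\right]=\sum_{n=1}^{\infty}\frac{{}^{M\!L\!\!}B_{p}(b+n,\;c-b)}{B(b,\;c-b)}\frac{z^{n-1}}{(n-1)!}.
\]

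Next I would reindex by $n\mapsto n+1$ and rewrite the constant $1/B(b,\;c-b)$ as $(b/c)\,B(b+1,\;c-b)^{-1}$ via \eqref{16}. The crucial bookkeeping observation is that, because $(c+1)-(b+1)=c-b$, the shifted numerator ${}^{M\!L\!\!}B_{p}(b+n+1,\;c-b)$ together with the denominator $B(b+1,\;c-b)$ are exactly the building blocks of ${}^{M\!L\!}{\Phi}_{p}(b+1;c+1;z)$. Recognizing the resulting sum as that series gives the first-order formula
\[
\frac{d}{dz}\left[{}^{M\!L\!}{\Phi}_{p}(b;c;z)\right]=\frac{b}{c}\,{}^{M\!L\!}{\Phi}_{p}(b+1;c+1;z).
\]

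Finally I would obtain \eqref{18} by induction on $n$. Assuming the formula holds for a given $n$, I differentiate once more and apply the first-order case with $b$ and $c$ replaced by $b+n$ and $c+n$; the prefactors then combine through $(b)_n(b+n)=(b)_{n+1}$ and $(c)_n(c+n)=(c)_{n+1}$, advancing both the coefficient ratio and the parameter shifts by one. I do not anticipate any genuine obstacle here: the computation is the same index manipulation already carried out for the GHF, minus the $(a)_n$ handling, and the only point deserving explicit mention is the interchange of $d/dz$ with the summation, which rests on the local uniform convergence of the series in \eqref{13}.
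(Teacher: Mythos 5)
Your proposal is correct and is exactly the argument the paper intends: its proof of this theorem is just the remark that ``a similar procedure as \eqref{17} gives the result,'' i.e.\ the same term-by-term differentiation, reindexing, and application of \eqref{16} that you carry out (minus the $(a)_n$ bookkeeping), followed by iteration. Your explicit mention of the induction step and of the justification for interchanging $d/dz$ with the sum only makes the argument more complete than the paper's.
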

\begin{proof}
A similar procedure as \eqref{17} gives the result.
\end{proof} 
\begin{theorem}
For the ML-generalized GHF, we have the following Mellin transform:
\begin{align} \label{19}
\mathfrak{M} \left[{}^{M\!L\!\!}\textrm{F}_{p}(a,b;c;z):s\right]= \frac{{}^{M\!L\!}\Gamma(s)B(b+s,c+s-b)}{B(b,\ c-b)} {}_{2}\textrm{F}_{1}(a,b+s;c+2s;z).
\end{align}
\end{theorem}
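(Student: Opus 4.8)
The plan is to take the Mellin transform in the variable $p$ directly on the series definition \eqref{12}. Writing
$$\mathfrak{M}\left[{}^{M\!L\!}\textrm{F}_{p}(a,b;c;z):s\right]=\int_{0}^{\infty}p^{s-1}\sum_{n=0}^{\infty}(a)_n\frac{{}^{M\!L\!\!}B_{p}(b+n,c-b)}{B(b,c-b)}\frac{z^n}{n!}\,dp,$$
I would first interchange the order of the $p$-integration and the summation over $n$, so that the transform becomes a weighted sum of Mellin transforms of the ML-generalized beta functions ${}^{M\!L\!\!}B_{p}(b+n,c-b)$.

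Next I would invoke the already-established Mellin transform of the ML-generalized beta function, namely \eqref{11}, which gives $\mathfrak{M}\left[{}^{M\!L\!\!}B_{p}(b+n,c-b):s\right]={}^{M\!L\!}\Gamma(s)\,B(b+n+s,c+s-b)$. Pulling the $n$-independent factor ${}^{M\!L\!}\Gamma(s)/B(b,c-b)$ out front, the remaining problem reduces to identifying the series $\sum_{n=0}^{\infty}(a)_n\,B(b+n+s,c+s-b)\,z^n/n!$ as a Gauss hypergeometric function.

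For that identification I would use the beta-gamma relation $B(x,y)=\Gamma(x)\Gamma(y)/\Gamma(x+y)$ to rewrite the beta factor as
$$B(b+n+s,c+s-b)=B(b+s,c+s-b)\,\frac{(b+s)_n}{(c+2s)_n},$$
which follows from $\Gamma(b+n+s)/\Gamma(b+s)=(b+s)_n$ and $\Gamma(c+2s+n)/\Gamma(c+2s)=(c+2s)_n$. Factoring $B(b+s,c+s-b)$ out of the sum, the residual series is $\sum_{n=0}^{\infty}\frac{(a)_n(b+s)_n}{(c+2s)_n}\frac{z^n}{n!}={}_{2}\textrm{F}_{1}(a,b+s;c+2s;z)$, giving exactly the claimed formula \eqref{19}.

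The step requiring the most care is the justification of the term-by-term Mellin transform, i.e. the interchange of the $p$-integral with the infinite sum over $n$. I expect to control this by absolute convergence: for $|z|<1$ the decay of $E_{\alpha,\beta}^{\gamma}\!\left(-p/(t(1-t))\right)$ in $p$ and the convergence of the $n$-series in the strip $Re(s)>0$, $Re(b+s)>0$, $Re(c+s-b)>0$ together guarantee that the iterated integral and sum converge absolutely, so Fubini-Tonelli applies. Everything after this interchange consists of the same routine beta-gamma reductions already used in the proof of \eqref{11}.
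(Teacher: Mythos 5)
Your proof is correct, but it follows a genuinely different route from the paper's. The paper starts from the Euler-type integral representation \eqref{14}, interchanges the $p$-integral with the $t$-integral, evaluates the inner integral $\int_{0}^{\infty}p^{s-1}E_{\alpha,\beta}^{\gamma}\bigl(-\tfrac{p}{t(1-t)}\bigr)dp={}^{M\!L\!}\Gamma(s)\,t^{s}(1-t)^{s}$ via the substitution $u=\tfrac{p}{t(1-t)}$, and then recognizes the surviving integral $\int_{0}^{1}t^{b+s-1}(1-t)^{c+s-b-1}(1-zt)^{-a}dt$ as $B(b+s,c+s-b)\,{}_{2}\textrm{F}_{1}(a,b+s;c+2s;z)$ through the classical Euler integral for the Gauss function. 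You instead work from the series definition \eqref{12}, interchange the $p$-integral with the sum over $n$, reuse the already-established Mellin transform \eqref{11} of the ML-generalized beta function, and identify the resulting series by the Pochhammer reduction $B(b+n+s,c+s-b)=B(b+s,c+s-b)\,\frac{(b+s)_n}{(c+2s)_n}$, which is correct. The two arguments are equivalent in substance --- the same substitution $u=\tfrac{p}{t(1-t)}$ is hidden inside your appeal to \eqref{11} --- but yours is more modular, since it leans on the earlier theorem rather than redoing its computation, and it lands on the series form of ${}_{2}\textrm{F}_{1}$ rather than its Euler integral. What the paper's version buys is that it only needs to interchange two integrals rather than an integral with an infinite series; your Fubini--Tonelli justification for the term-by-term transform is stated at essentially the same informal level of rigor as the paper's own interchange arguments, so nothing is lost on that front.
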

\begin{proof}
To obtain the Mellin transform, multiply both sides of \eqref{14} by $p^{s-1}$ and integrate with respect to $p$ over the interval $[0,\infty).$Thus we get 
\begin{align} \nonumber
\mathfrak{M} [{}^{M\!L\!}\textrm{F}_{p}(a,b;c;z):s&]= \int_{0}^{\infty} p^{s-1}{}^{M\!L\!}\textrm{F}_{p}(a,b;c;z) dp & \\ \nonumber &= \frac{1}{B(b,c-b)}\int_{0}^{1}t^{b-1}(1-t)^{c-b-1}(1-zt)^{-a} \left[\int_{0}^{\infty} p^{s-1}E_{\alpha, \beta}^{\gamma}\left(-\frac{p}{t(1-t)}dp\right)\right]dt.&
\end{align}
Substituting $u=\frac{p}{t(1-t)}$ in the integral leads to
\begin{align} \nonumber
\int_{0}^{\infty} p^{s-1}	E_{\alpha, \beta}^{\gamma}\left(-\frac{p}{t(1-t)}dp\right)&=\int_{0}^{\infty} u^{s-1}t^s(1-t)^s	E_{\alpha, \beta}^{\gamma}\left(-u\right)du& \\ \nonumber&= t^s(1-t)^s \int_{0}^{\infty} u^{s-1}E_{\alpha, \beta}^{\gamma}\left(-u\right)du& \\ \nonumber&= {}^{M\!L\!}\Gamma(s) t^s(1-t)^s.&
\end{align}
Thus we get 
\begin{align} \nonumber
\mathfrak{M} \left[{}^{M\!L\!}\textrm{F}_{p}(a,b;c;z):s\right]&=\frac{1}{B(b,c-b)}\int_{0}^{1}t^{b+s-1}(1-t)^{c+s-b-1}(1-zt)^{-a} {}^{M\!L\!}\Gamma(s) dt& \\ \nonumber
&=\frac{{}^{M\!L\!}\Gamma(s)}{B(b,c-b)}\int_{0}^{1}t^{b+s-1}(1-t)^{c+2s-(b+s)-1}(1-zt)^{-a} dt& \\ \nonumber &=\frac{{}^{M\!L\!}\Gamma(s)B(b+s,c+s-b)}{B(b,\; c-b)} {}_{2}\textrm{F}_{1}(a,b+s;c+2s;z)&
\end{align}
which completes the proof.
\end{proof}
\begin{coro}
By the Mellin inversion formula, we have the following complex integral representation for ML-generalized GHF: 
\begin{align} \nonumber
&{}^{M\!L\!\!}\textrm{F}_{p}(a,b;c;z)& \\ & = \frac{1}{2\pi i} \int_{-i\infty}^{-i\infty}\frac{{}^{M\!L\!}\Gamma(s)B(b+s,c+s-b)}{B(b,\; c-b)} {}_{2}\textrm{F}_{1}(a,b+s;c+2s;z)p^{-s}ds.&
\end{align}
 \end{coro}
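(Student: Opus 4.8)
The plan is to obtain this representation by simply inverting the Mellin transform established in the immediately preceding theorem. Recall that the Mellin transform of a function $f(p)$ of the variable $p$ is $\mathfrak{M}[f(p):s]=\int_{0}^{\infty}p^{s-1}f(p)\,dp$, and that the corresponding Mellin inversion formula reads $f(p)=\frac{1}{2\pi i}\int_{\sigma-i\infty}^{\sigma+i\infty}\mathfrak{M}[f(p):s]\,p^{-s}\,ds$, valid for any $\sigma$ lying in the fundamental strip of analyticity of the transform. Here I take $f(p)={}^{M\!L\!}\textrm{F}_{p}(a,b;c;z)$, viewed as a function of the parameter $p$, and equation \eqref{19} provides its transform in closed form, namely $\mathfrak{M}\left[{}^{M\!L\!\!}\textrm{F}_{p}(a,b;c;z):s\right]=\frac{{}^{M\!L\!}\Gamma(s)B(b+s,c+s-b)}{B(b,\ c-b)}\,{}_{2}\textrm{F}_{1}(a,b+s;c+2s;z)$.

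First I would substitute this closed form directly into the inversion formula; taking the contour along the imaginary axis then reproduces precisely the asserted integral representation for ${}^{M\!L\!\!}\textrm{F}_{p}(a,b;c;z)$. This is the same mechanism already exploited in the earlier Corollary for the ML-generalized beta function, where the transform \eqref{11} was inverted to obtain a complex integral representation; the present argument is entirely parallel, so no genuinely new computation is required beyond citing \eqref{19}.

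The step that I expect to demand the most care is the justification that Mellin inversion legitimately applies. One must verify that $F(s)=\mathfrak{M}\left[{}^{M\!L\!\!}\textrm{F}_{p}(a,b;c;z):s\right]$ is analytic in a vertical strip containing the integration contour and decays fast enough as $|Im(s)|\to\infty$ for the inversion integral to converge and to recover $f(p)$. Under the standing hypotheses $Re(p)>0$, $Re(c)>Re(b)>0$, $Re(\alpha)>0$, the factors ${}^{M\!L\!}\Gamma(s)$ and $B(b+s,c+s-b)$ are analytic for $Re(s)>0$ while the hypergeometric factor ${}_{2}\textrm{F}_{1}(a,b+s;c+2s;z)$ is well behaved there, so the fundamental strip is nonempty and the contour may be placed as stated, which completes the argument.
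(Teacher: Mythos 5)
Your proposal is correct and follows exactly the paper's own argument: the corollary is obtained by applying the Mellin inversion formula to the transform established in \eqref{19}, just as the earlier corollary inverted \eqref{11}. Your additional remarks on verifying analyticity and decay in a vertical strip go beyond what the paper records (its proof is a single sentence), but the mechanism is identical.
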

\begin{proof}
Taking Mellin inversion of \eqref{19}, we get the result.
\end{proof}
\begin{theorem}
	For the ML-generalized CHF, we have the following Mellin transform:
	\begin{align} \label{20}
	\mathfrak{M} \left[{}^{M\!L\!}{\Phi}_{p}(b;c;z):s\right]= \frac{{}^{M\!L\!}\Gamma(s)B(b+s,c+s-b)}{B(b,\; c-b)}\Phi(b+s;c+2s;z).
	\end{align}
\end{theorem}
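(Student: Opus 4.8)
The plan is to mirror the proof of the Mellin transform for the ML-generalized GHF in \eqref{19}, with the factor $(1-zt)^{-a}$ replaced throughout by $e^{zt}$. First I would start from the Euler-type integral representation of the ML-generalized CHF recorded in \eqref{15},
\begin{align*}
{}^{M\!L\!}{\Phi}_{p}(b;c;z)=\frac{1}{B(b,c-b)} \int_{0}^{1}t^{b-1}(1-t)^{c-b-1}e^{zt}\,E_{\alpha, \beta}^{\gamma}\left(-\frac{p}{t(1-t)}\right)dt,
\end{align*}
multiply both sides by $p^{s-1}$, and integrate with respect to $p$ over $[0,\infty)$, exactly as in the definition of $\mathfrak{M}$.

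Next I would interchange the order of the $p$- and $t$-integrations, so that the inner integral becomes $\int_{0}^{\infty} p^{s-1} E_{\alpha,\beta}^{\gamma}\left(-\frac{p}{t(1-t)}\right)dp$. Applying the one-to-one substitution $u=\frac{p}{t(1-t)}$, just as in the GHF computation, gives
\begin{align*}
\int_{0}^{\infty} p^{s-1} E_{\alpha,\beta}^{\gamma}\left(-\frac{p}{t(1-t)}\right)dp = t^{s}(1-t)^{s}\int_{0}^{\infty} u^{s-1} E_{\alpha,\beta}^{\gamma}(-u)\,du = {}^{M\!L\!}\Gamma(s)\,t^{s}(1-t)^{s},
\end{align*}
where the final equality invokes the definition of ${}^{M\!L\!}\Gamma(s)$ at $p=0$. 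Substituting this back and regrouping the powers of $t$ and $1-t$ then yields
\begin{align*}
\mathfrak{M}\left[{}^{M\!L\!}{\Phi}_{p}(b;c;z):s\right] = \frac{{}^{M\!L\!}\Gamma(s)}{B(b,c-b)}\int_{0}^{1}t^{(b+s)-1}(1-t)^{(c+2s)-(b+s)-1}e^{zt}\,dt.
\end{align*}

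Finally I would recognize the remaining integral as the classical Euler representation of $\Phi(b+s;c+2s;z)$ times $B(b+s,c+s-b)$: setting $B=b+s$ and $C=c+2s$, one checks $C-B-1=c+s-b-1$ and $B(B,C-B)=B(b+s,c+s-b)$, so the exponents match precisely and \eqref{20} follows. The only genuinely delicate step will be justifying the interchange of the order of integration; I expect the same uniform-convergence (Fubini) argument used for \eqref{11} to apply verbatim under the stated conditions $Re(s)>0$, $Re(p)>0$, $Re(\alpha)>0$ and $Re(c)>Re(b)>0$, so that no new obstacle arises beyond careful exponent bookkeeping.
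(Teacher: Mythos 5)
Your proposal is correct and is exactly the argument the paper intends: the paper's own proof consists of the single remark that ``a similar argument as \eqref{19} gives the result,'' and you have simply carried out that analogous computation explicitly, with the factor $e^{zt}$ in place of $(1-zt)^{-a}$. No discrepancy in approach or substance.
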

\begin{proof}
A similar argument as \eqref{19} gives the Mellin transform of ML-generalized CHF.
\end{proof}
\begin{coro}
	By the Mellin inversion formula, we have the following complex integral representation for ML-generalized CHF:
	\begin{align} \nonumber
	&{}^{M\!L\!}{\Phi}_{p}(b;c;z)& \\ & = \frac{1}{2\pi i} \int_{-i\infty}^{-i\infty}\frac{{}^{M\!L\!}\Gamma(s)B(b+s,c+s-b)}{B(b,\; c-b)} \Phi(b+s;c+2s;z)p^{-s}ds.&
\end{align}
\end{coro}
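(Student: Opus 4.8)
The plan is to treat the corollary as a one-line application of the Mellin inversion theorem to the transform pair established in \eqref{20}. Recall that if $f(p)$ on $(0,\infty)$ has Mellin transform $\mathfrak{M}[f(p):s]=\int_{0}^{\infty}p^{s-1}f(p)\,dp=:F(s)$, holomorphic in a vertical strip and integrable along the vertical lines inside it, then $f$ is recovered by the Bromwich integral
\begin{equation*}
f(p)=\frac{1}{2\pi i}\int_{\sigma-i\infty}^{\sigma+i\infty}F(s)\,p^{-s}\,ds
\end{equation*}
for any $\sigma$ in that strip. I would apply this with $f(p)={}^{M\!L\!}{\Phi}_{p}(b;c;z)$ and $F(s)$ equal to the right-hand side of \eqref{20}, so that substitution produces the asserted formula at once. (The two endpoints printed as $-i\infty$ in the statement should be read as $\sigma-i\infty$ and $\sigma+i\infty$, the ends of the vertical Bromwich contour.)

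Concretely, I would first record from \eqref{20} that
\begin{equation*}
\mathfrak{M}[{}^{M\!L\!}{\Phi}_{p}(b;c;z):s]=\frac{{}^{M\!L\!}\Gamma(s)B(b+s,c+s-b)}{B(b,\;c-b)}\Phi(b+s;c+2s;z),
\end{equation*}
valid in the strip cut out by $Re(s)>0$, $Re(b+s)>0$ and $Re(c+s-b)>0$; since $Re(c)>Re(b)>0$, every $\sigma>0$ lies in this nonempty fundamental strip. Fixing such a $\sigma$ and inverting gives
\begin{equation*}
{}^{M\!L\!}{\Phi}_{p}(b;c;z)=\frac{1}{2\pi i}\int_{\sigma-i\infty}^{\sigma+i\infty}\frac{{}^{M\!L\!}\Gamma(s)B(b+s,c+s-b)}{B(b,\;c-b)}\Phi(b+s;c+2s;z)\,p^{-s}\,ds,
\end{equation*}
which is exactly the claimed complex integral representation.

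The only step requiring more than formal substitution --- and hence the main obstacle --- is checking the hypotheses of the inversion theorem, namely that $F(s)$ is integrable along the line $Re(s)=\sigma$. A subtlety surfaces here: writing $B(b+s,c+s-b)=\Gamma(b+s)\Gamma(c+s-b)/\Gamma(c+2s)$ and applying Stirling, the factors $e^{-\pi|\mathrm{Im}(s)|/2}$ from the two numerator Gammas are exactly cancelled by the factor $e^{-\pi|\mathrm{Im}(s)|}$ from $\Gamma(c+2s)$ (whose imaginary part doubles), so this Beta ratio is only polynomially bounded in $|\mathrm{Im}(s)|$; similarly $\Phi(b+s;c+2s;z)$ stays bounded along the line, since for large $|\mathrm{Im}(s)|$ its series is dominated term-by-term by $\sum_n|z|^n/n!$. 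The genuine decay that makes the Bromwich integral converge must therefore be supplied by ${}^{M\!L\!}\Gamma(s)$, which I would extract from the vertical-line asymptotics of that Mellin transform (Stirling applied to its representation as a quotient of Gamma functions) --- this is also where any restriction on $\alpha$ beyond $Re(\alpha)>0$ would enter. Once the requisite decay is in place, absolute convergence of the inversion integral follows; combined with the continuity of $p\mapsto{}^{M\!L\!}{\Phi}_{p}(b;c;z)$ on $(0,\infty)$, inherited through \eqref{13} from the integral \eqref{4}, this legitimises the pointwise reconstruction and completes the proof.
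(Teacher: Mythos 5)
Your proposal follows exactly the paper's route: the paper proves this corollary in one line by applying Mellin inversion to the transform pair \eqref{20}, which is precisely your central step. Your additional verification of the inversion hypotheses (the fundamental strip, the Stirling-type cancellation in the Beta ratio, and the decay supplied by ${}^{M\!L\!}\Gamma(s)$) goes beyond what the paper records --- including your sensible reading of the misprinted contour endpoints as $\sigma\pm i\infty$ --- but does not change the argument.
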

\begin{proof}
Taking Mellin inversion of \eqref{20}, we get result.
\end{proof}
\begin{theorem} For the ML-generalized GHF, we have the following transformation formula:
\begin{align} \label{21}
{}^{M\!L\!\!}\textrm{F}_{p}(a,b;c;z)=(1-z)^{-a}{}^{M\!L\!\!}\textrm{F}_{p}\left(a,c-b;b;\frac{z}{z-1}\right),\ (|arg(1-z)|<\pi).
\end{align}
\end{theorem}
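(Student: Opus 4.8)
The plan is to deduce this Pfaff-type transformation directly from the Euler integral representation \eqref{14},
\begin{align*}
{}^{M\!L\!\!}\textrm{F}_{p}(a,b;c;z)=\frac{1}{B(b,c-b)} \int_{0}^{1}t^{b-1}(1-t)^{c-b-1}(1-zt)^{-a}	E_{\alpha, \beta}^{\gamma}\left(-\frac{p}{t(1-t)}\right)dt,
\end{align*}
by performing the reflection substitution $t=1-u$. This mirrors the classical derivation of Pfaff's transformation for ${}_{2}\textrm{F}_{1}$; the only genuinely new ingredient is the Mittag-Leffler factor, so the first thing I would verify is that this factor is unaffected by the substitution.

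Indeed, since $t(1-t)=(1-u)u=u(1-u)$, the argument $-\frac{p}{t(1-t)}$ is invariant under $t\mapsto 1-u$, and the factor $E_{\alpha, \beta}^{\gamma}\!\left(-\frac{p}{t(1-t)}\right)$ passes to $E_{\alpha, \beta}^{\gamma}\!\left(-\frac{p}{u(1-u)}\right)$ unchanged. The two power weights interchange, $t^{b-1}(1-t)^{c-b-1}\mapsto (1-u)^{b-1}u^{c-b-1}$, while $dt=-du$ restores the limits to $[0,1]$. The decisive algebraic step is the factorization
\begin{align*}
1-z(1-u)=(1-z)\left(1-\frac{z}{z-1}\,u\right),
\end{align*}
which gives $(1-z(1-u))^{-a}=(1-z)^{-a}\left(1-\frac{z}{z-1}u\right)^{-a}$ and lets me pull $(1-z)^{-a}$ outside the integral.

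Collecting these, the representation becomes
\begin{align*}
{}^{M\!L\!\!}\textrm{F}_{p}(a,b;c;z)=(1-z)^{-a}\frac{1}{B(b,c-b)}\int_{0}^{1}u^{c-b-1}(1-u)^{b-1}\left(1-\frac{z}{z-1}u\right)^{-a}E_{\alpha, \beta}^{\gamma}\left(-\frac{p}{u(1-u)}\right)du.
\end{align*}
Using the symmetry $B(b,c-b)=B(c-b,b)$, I would then recognize the remaining integral, by direct comparison with \eqref{14}, as the Euler representation of the ML-generalized GHF with numerator parameters $a$ and $c-b$, denominator parameter $c$, and argument $\frac{z}{z-1}$; this identifies the right-hand side and completes the argument.

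The step that demands the most care is analytic rather than combinatorial: I must keep $|\arg(1-z)|<\pi$ so that $(1-z)^{-a}$ and $\left(1-\frac{z}{z-1}u\right)^{-a}$ remain single-valued for $u\in[0,1]$, and check that $\frac{z}{z-1}$ lies in the region where the transformed representation is valid. No interchange of summation and integration is needed here — everything is a change of variable in a single convergent integral — so convergence is controlled entirely by the standing hypotheses $Re(c)>Re(b)>0$ and $Re(\alpha)>0$ together with the branch condition.
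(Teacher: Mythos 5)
Your proof is correct and follows essentially the same route as the paper's: both substitute $t\to 1-t$ in the Euler integral representation \eqref{14}, observe that the factor $E_{\alpha,\beta}^{\gamma}\left(-\frac{p}{t(1-t)}\right)$ is invariant under this reflection, and use the factorization $1-z(1-t)=(1-z)\left(1-\frac{z}{z-1}t\right)$ to pull out $(1-z)^{-a}$. One point worth noting: your identification of the resulting integral, giving denominator parameter $c$, i.e.\ $(1-z)^{-a}\,{}^{M\!L\!\!}\textrm{F}_{p}\left(a,c-b;c;\frac{z}{z-1}\right)$ in agreement with the classical Pfaff transformation, is the correct one --- the $b$ appearing in the third parameter slot of the formula as printed in \eqref{21} (and in the last line of the paper's proof) is inconsistent with the integral actually obtained, since $t^{(c-b)-1}(1-t)^{b-1}$ normalized by $B(b,c-b)=B(c-b,b)$ is the Euler kernel for parameters $(c-b,\,c)$, not $(c-b,\,b)$.
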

\begin{proof}
By writing 
\begin{eqnarray} \nonumber
[1-z(1-t)]^{-a}=(1-z){-a}\left(1+ \frac{z}{1-z}t\right)^{-a}
\end{eqnarray}
and replacing $t\to 1-t$ in \eqref{14}, we obtain 
\begin{align}  \nonumber
&{}^{M\!L\!}\textrm{F}_{p}(a,b;c;z)&
\\ \nonumber &=\frac{(1-z)^{-a}}{B(b,c-b)} \int_{0}^{1}(1-t)^{b-1}t^{c-b-1}\left(1-\frac{z}{z-1}t\right)^{-a}E_{\alpha, \beta}^{\gamma}\left(-\frac{p}{t(1-t)}\right)dt.& 
\end{align}
Hence, 
\begin{align} \nonumber
{}^{M\!L\!}\textrm{F}_{p}(a,b;c;z)=(1-z)^{-a}{}^{M\!L\!}\textrm{F}_{p}\left(a,c-b;b;\frac{z}{z-1}\right)
\end{align}
which completes the proof.
\end{proof}
\begin{rem}
Note that, replacing $z$ by $1-\frac{1}{z}$ in \eqref{21}, we get the following transformation formula, 
\begin{align} 
{}^{M\!L\!\!}\textrm{F}_{p}\left(a,b;c;1-\frac{1}{z}\right)=(z)^{a}{}^{M\!L\!\!}\textrm{F}_{p}\left(a,c-b;b;1-z\right),\ (|arg(z)|<\pi).
\end{align}
Furhermore, replacing $z$ by  $ \frac{z}{1+z}$ in \eqref{21}, we get the following transformation formula, 
\begin{align} 
{}^{M\!L\!\!}\textrm{F}_{p}\left(a,b;c;\frac{z}{1+z}\right)=(1+z)^{a}{}^{M\!L\!\!}\textrm{F}_{p}\left(a,c-b;b;z\right),\ (|arg(1+z)|<\pi).
\end{align}
\end{rem}
\begin{theorem} For the ML-generalized CHF, we have the following transformation formula:
\begin{eqnarray} 
{}^{M\!L\!}{\Phi}_{p}(b;c;z) = \exp(z){}^{M\!L\!}{\Phi}_{p}(c-b;c;-z).
\end{eqnarray} 
\end{theorem}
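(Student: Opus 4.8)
The plan is to mimic the proof of the Euler-type transformation \eqref{21} by exploiting an integral representation of the ML-generalized CHF together with the symmetry of both the beta function and the argument of the Mittag-Leffler kernel. The cleanest route is to start from the \emph{second} integral representation recorded in \eqref{15},
\[
{}^{M\!L\!}{\Phi}_{p}(b;c;z)=\frac{1}{B(b,c-b)} \int_{0}^{1}t^{c-b-1}(1-t)^{b-1}e^{z(1-t)} E_{\alpha, \beta}^{\gamma}\left(-\frac{p}{t(1-t)}\right)dt,
\]
which already carries the factor $e^{z(1-t)}$ in place.

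First I would factor $e^{z(1-t)}=e^{z}e^{-zt}$ and pull the constant $e^{z}$ outside the integral, leaving an integrand of the form $t^{c-b-1}(1-t)^{b-1}e^{-zt}E_{\alpha,\beta}^{\gamma}(-p/(t(1-t)))$. Next I would invoke the symmetry $B(b,c-b)=B(c-b,b)$ to rewrite the normalizing constant as $1/B(c-b,b)$.

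The key observation is that the remaining expression is \emph{exactly} the first integral representation in \eqref{15} with the parameter $b$ replaced by $c-b$ and the variable $z$ replaced by $-z$: under $b\mapsto c-b$ the exponents become $t^{(c-b)-1}$ and $(1-t)^{c-(c-b)-1}=(1-t)^{b-1}$, the beta normalizer becomes $B(c-b,b)$, and $z\mapsto -z$ produces the factor $e^{-zt}$; crucially, the Mittag-Leffler kernel $E_{\alpha,\beta}^{\gamma}(-p/(t(1-t)))$ is left invariant because it depends only on the symmetric quantity $t(1-t)$. Hence the integral equals $B(c-b,b)\,{}^{M\!L\!}{\Phi}_{p}(c-b;c;-z)$, and combining with the factored-out $e^{z}$ yields the claimed identity.

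Alternatively, one may reach the same conclusion by starting from the first representation in \eqref{15} and performing the substitution $t\mapsto 1-t$, which interchanges the exponents $b-1$ and $c-b-1$, turns $e^{zt}$ into $e^{z}e^{-zt}$, and again leaves the symmetric Mittag-Leffler argument unchanged. I do not expect any genuine obstacle here; the only point requiring care is the bookkeeping of the exponents under $b\mapsto c-b$ and confirming that the kernel's dependence on $t(1-t)$ is indeed symmetric, so that no spurious factor of $p$ or sign is introduced.
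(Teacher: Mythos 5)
Your proposal is correct and is essentially the paper's own argument: the paper likewise takes the first integral representation in \eqref{15} and substitutes $t\to 1-t$, which is exactly your ``alternative'' route (and your main route simply starts from the second representation, which is that same substitution already carried out). You merely spell out the bookkeeping --- factoring $e^{z(1-t)}=e^{z}e^{-zt}$, the symmetry $B(b,c-b)=B(c-b,b)$, and the invariance of the kernel under $t\mapsto 1-t$ --- in more detail than the paper does.
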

\begin{proof} Using the definition of ML-generalized CHF \eqref{15}, we have 
\begin{align}  \nonumber
&{}^{M\!L\!}{\Phi}_{p}(b;c;z)=\frac{1}{B(b,c-b)} \int_{0}^{1}t^{b-1}(1-t)^{c-b-1}e^{zt}	E_{\alpha, \beta}^{\gamma}\left(-\frac{p}{t(1-t)}\right)dt,& 
\end{align}	
replacing $t\to 1-t$, we get result.
\end{proof}
\begin{theorem} The recurence relation of the ML-generalized GHF is
\begin{eqnarray}
\Delta_{a}{}^{M\!L\!\!}\textrm{F}_{p}(a,b;c;z)=\frac{bz}{c}{}^{M\!L\!\!}\textrm{F}_{p}(a+1,b+1;c+1;z).
\end{eqnarray}
\end{theorem}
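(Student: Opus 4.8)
The plan is to read $\Delta_a$ as the forward difference operator in the numerator parameter $a$, namely $\Delta_a\,{}^{M\!L\!}\textrm{F}_{p}(a,b;c;z) = {}^{M\!L\!}\textrm{F}_{p}(a+1,b;c;z) - {}^{M\!L\!}\textrm{F}_{p}(a,b;c;z)$, and to work entirely from the series definition \eqref{12}. Applying the operator termwise (justified by the same absolute convergence that underlies the differentiation formula \eqref{17}), the two series combine into a single series whose $n$-th coefficient carries the factor $(a+1)_n - (a)_n$ while the beta factor ${}^{M\!L\!\!}B_{p}(b+n,\;c-b)/B(b,\;c-b)$ and the $z^n/n!$ are left untouched.

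First I would establish the Pochhammer identity $(a+1)_n - (a)_n = n\,(a+1)_{n-1}$, which follows by factoring the common block $(a+1)_{n-1}$ out of both $(a+1)_n = (a+1)_{n-1}(a+n)$ and $(a)_n = a\,(a+1)_{n-1}$ and then simplifying $(a+n)-a = n$; the $n=0$ term vanishes on both sides. Substituting this and cancelling $n/n! = 1/(n-1)!$ removes the constant term and leaves a sum over $n \ge 1$ whose coefficients involve $(a+1)_{n-1}$ and ${}^{M\!L\!\!}B_{p}(b+n,\;c-b)$.

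Next I would reindex by $n \to n+1$, which pulls out one factor of $z$, shifts the beta argument to ${}^{M\!L\!\!}B_{p}(b+n+1,\;c-b)$, and turns the Pochhammer into $(a+1)_n$. At this stage the summand already matches that of ${}^{M\!L\!}\textrm{F}_{p}(a+1,b+1;c+1;z)$ except that the normalizing constant is still $B(b,c-b)$ rather than $B(b+1,c-b)$; note that $c+1-(b+1)=c-b$, so the beta arguments themselves agree. The final step is to invoke the relation \eqref{16}, $B(b,\;c-b) = \frac{c}{b}\,B(b+1,\;c-b)$, which converts the normalization and produces exactly the prefactor $\frac{bz}{c}$, yielding $\frac{bz}{c}\,{}^{M\!L\!}\textrm{F}_{p}(a+1,b+1;c+1;z)$.

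I expect the main obstacle to be purely bookkeeping: aligning the index shift with the Pochhammer factorization so that the shifted series is recognized as ${}^{M\!L\!}\textrm{F}_{p}(a+1,b+1;c+1;z)$ with the correct beta normalization. No analytic difficulty arises, since the argument closely parallels the proof of the differentiation formula \eqref{17} and reuses its two ingredients, the Pochhammer shift and \eqref{16}.
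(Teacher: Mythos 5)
Your proof is correct, but it takes a genuinely different route from the paper's. The paper works with the Euler-type integral representation \eqref{14}: it writes $\Delta_a$ as a single integral with integrand factor $(1-zt)^{-a-1}-(1-zt)^{-a}=(1-zt)^{-a-1}\{1-(1-zt)\}=zt\,(1-zt)^{-a-1}$, recognizes the resulting integral as $B(b+1,c-b)\,{}^{M\!L\!}\textrm{F}_{p}(a+1,b+1;c+1;z)$ via \eqref{23}, and finishes with \eqref{16}. You instead work termwise on the series definition \eqref{12}, and your Pochhammer identity $(a+1)_n-(a)_n=n\,(a+1)_{n-1}$ is exactly the coefficient-level shadow of the paper's integrand factorization; after the index shift $n\to n+1$ the ratio $B(b+1,c-b)/B(b,c-b)=b/c$ from \eqref{16} supplies the same prefactor $bz/c$. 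Your route has the merit of staying entirely within the defining series (so it needs only $|z|<1$ and the convergence already assumed in \eqref{12}, not the integral representation), and it runs parallel to the paper's own proof of the differentiation formula \eqref{17}; the paper's route is slightly more compact and dovetails with the integral-representation machinery it has already set up. Both arguments are sound and rest on the same two ingredients, the shift in the parameter $a$ and the beta-function relation \eqref{16}.
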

\begin{proof}
By integral \eqref{14} of the ML-generalized GHF and if $\Delta_{a}$ is the shift operator with respect to $a$, we see that
\begin{align} \label{22}
\Delta_{a}&{}^{M\!L\!}\textrm{F}_{p}(a,b;c;z)& \\ 
\nonumber &={}^{M\!L\!}\textrm{F}_{p}(a+1,b;c;z)-{}^{M\!L\!}\textrm{F}_{p}(a,b;c;z)& \\ 
\nonumber &=\frac{1}{B(b,c-b)} \int_{0}^{1}t^{b-1}(1-t)^{c-b-1}(1-zt)^{-a-1}\{1-(1-zt)\} E_{\alpha, \beta}^{\gamma}\left(-\frac{p}{t(1-t)}\right)dt& \\ 
\nonumber &= \frac{z}{B(b,c-b)} \int_{0}^{1}t^{b}(1-t)^{c-b-1}(1-zt)^{-a-1} E_{\alpha, \beta}^{\gamma}\left(-\frac{p}{t(1-t)}\right)dt.& 
\end{align}
Changing $a$, $b$ and $c$ to $a+1$, $b+1$ and $c+1$ in \eqref{14}, respectively,
\begin{align} \label{23}
{}^{M\!L\!}\textrm{F}_{p}(a+1,b+1;c+1;z) 
=\frac{1}{B(b+1,c-b)} \int_{0}^{1}t^{b}(1-t)^{c-b-1}(1-zt)^{-a-1} E_{\alpha, \beta}^{\gamma}\left(-\frac{p}{t(1-t)}\right)dt.
\end{align}
Using the result \eqref{16} and \eqref{23} in \eqref{22}, we have
\begin{eqnarray} \nonumber
\Delta_{a}{}^{M\!L\!}\textrm{F}_{p}(a,b;c;z)=\frac{bz}{c}{}^{M\!L\!}\textrm{F}_{p}(a+1,b+1;c+1;z)
\end{eqnarray} 
which completes the proof. 
\end{proof}
\begin{rem} Setting $z=1$ in \eqref{14}, we have the following relation between defined ML-generalized hypergeometric and ML-generalized beta functions: 
\begin{align}  \nonumber
{}^{M\!L\!}\textrm{F}_{p}(a,b;c;1)=&\frac{1}{B(b,c-b)} \int_{0}^{1}t^{b-1}(1-t)^{c-a-b-1}	E_{\alpha, \beta}^{\gamma}\left(-\frac{p}{t(1-t)}\right)dt& \\ \nonumber =& \frac{{}^{M\!L\!\!}B_{p}(b,\;c-a-b)}{B(b,\;c-b)}&
\end{align}
\end{rem}
\begin{conc}
In these sections 2 and 3, when $\alpha=\beta=\gamma=1$, in integral representations, summation relations, functional relation, Mellin transforms, differentiation formulas, transformation formulas and recurence relations obtained usig ML- generalizations of gamma, beta and hypergeometric functions recudes to  which defined in Chaudry et. al. \cite{3,7}. Also, when $p=0$ and $\beta=1$ (or $\beta=2$) , in these results recudes to classical results for Euler's beta function \cite{1}. 
\end{conc}

\end{document}